\newtheorem{theorem}{Theorem}[section]
\numberwithin{equation}{section}
\newcommand{\abs}[1]{\left\lvert#1\right\rvert}
\newcommand{\gfextn}[0]{jpeg}
\newcommand{\oscwidth}[0]{1.55in}
\DeclareMathOperator{\Res}{Res}
\begin{document}

\title{Oscillations in weighted arithmetic sums}

\author[M.~J. Mossinghoff]{Michael J. Mossinghoff}
\address{Center for Communications Research\\
Princeton, NJ, USA}
\email{m.mossinghoff@idaccr.org}

\author[T.~S. Trudgian]{Timothy S. Trudgian}\thanks{This work was supported by a Future Fellowship (FT160100094 to T.~S. Trudgian) from the Australian Research Council.}
\address{School of Science\\
UNSW Canberra at ADFA\\
ACT 2610, Australia}
\email{t.trudgian@adfa.edu.au}

\keywords{Prime-counting functions, Liouville function, oscillations, Riemann hypothesis}
\subjclass[2010]{Primary: 11M26, 11N64; Secondary: 11Y35, 11Y70}
\date{\today}

\begin{abstract}
We examine oscillations in a number of sums of arithmetic functions involving $\Omega(n)$, the total number of prime factors of $n$, and $\omega(n)$, the number of distinct prime factors of $n$.
In particular, we examine oscillations in $S_\alpha(x) = \sum_{n\leq x} (-1)^{n - \Omega(n)}/n^\alpha$ and in  $H_\alpha(x) = \sum_{n\leq x} (-1)^{\omega(n)}/n^{\alpha}$ for $\alpha\in[0,1]$, and in $W(x)=\sum_{n\leq x} (-2)^{\Omega(n)}$.
We show for example that each of the inequalities $S_0(x)<0$, $S_0(x)>3.3\sqrt{x}$, $S_1(x)>0$, and $S_1(x)\sqrt{x}<-3.3$ is true infinitely often, disproving some hypotheses of Sun.
\end{abstract}

\maketitle

\section{Introduction}\label{Intro}

Let $\omega(n)$ denote the number of distinct prime factors of $n$, and let $\Omega(n)$ denote its total number of prime factors, counting multiplicity.
Questions involving the parity of these functions have a long history.
For $\alpha\in[0,1]$, define
\begin{equation}\label{eqnL}
L_{\alpha}(x) = \sum_{n\leq x} \frac{(-1)^{\Omega(n)}}{n^{\alpha}}
\end{equation}
and
\begin{equation}\label{eqnH}
H_{\alpha}(x) = \sum_{n\leq x} \frac{(-1)^{\omega(n)}}{n^{\alpha}}.
\end{equation}
P\'olya studied $L_0(x)$ in his 1919 article \cite{Polya}.
He noted that $L_0(x)$ was not positive after $L_0(1)=1$ up to approximately $x=1500$, and remarked that the Riemann hypothesis, as well as the simplicity of the zeros of the zeta function, would follow if this held true for all sufficiently large $x$.
More generally, it is well known that both of these statements would hold if the normalized function $L_0(x)/\sqrt{x}$ were bounded by some constant, either above or below.
In his influential 1942 paper \cite{Ingham1942}, Ingham showed that considerably more would follow in this case: there would exist infinitely many linear dependencies over the rationals among the ordinates of the zeros of the zeta function on the critical line in the upper half plane.
Stated another way, if the (positive) ordinates of the nontrivial zeros of $\zeta(s)$ are linearly independent, then arbitrarily large oscillations must exist in $L_0(x)/\sqrt{x}$.
In fact, Ingham showed that this holds both for P\'olya's function $L_0(x)$ and for its close cousin, the Mertens function,
\begin{equation}\label{eqnMertens}
M(x)=\sum_{n\leq x} \mu(n).
\end{equation}
Since it seems there is no \textit{a priori} reason to suspect such linear dependencies, and none has ever been detected among the zeros of the zeta function, it is often conjectured that these functions exhibit unbounded oscillations.
Large oscillations have been shown to exist in the Mertens function \cite{Best,Hurst} and in the error term for the distribution of $k$-free numbers \cite{Olive}.

Tur\'an studied $L_1(x)$ in 1948 \cite{Turan}, where he reported that this function is never negative over $2\leq x\leq1000$, and connected  its behavior to the zeros of the Riemann zeta function.
Here, the Riemann hypothesis, as well as the simplicity of the zeros and the existence of linear dependencies among their ordinates, would similarly follow if $L_1(x)\sqrt{x}$ were bounded either above or below.
Haselgrove \cite{Haselgrove} established that infinitely many sign changes in both $L_0(x)$ and $L_1(x)$ exist. Specific locations of sign changes in these functions were first determined respectively in \cite{Lehman} and in~\cite{Borwein}.

The broader family $L_\alpha(x)$ was investigated in \cite{MosTru}, where the authors established how these functions are connected to the Riemann hypothesis, as well as the statement on the simplicity of the zeros of $\zeta(s)$ and the question of linear independence.
In \cite{MT2017} it was shown that substantial oscillations must exist in the normalized functions $L_{\alpha}(x)x^{\frac{1}{2}-\alpha}$.
For example, it was shown that each of the inequalities $L_0(x)/\sqrt{x}>1$ and $L_1(x)\sqrt{x}>2.37$ holds for infinitely many integers $x$ (see \cite[Thms.\ 1.1 and 6.2]{MT2017}).
The reader is referred to \cite{Borwein,HumphriesJNT,MosTru,MT2017} for additional information and background on this family of functions.

In \cite{Dickens}, the authors established similar connections and statements for the function $H_0(x)$.
Here, among other things, it was established that each of the inequalities $H_0(x)/\sqrt{x} > 1.7 $ and $H_0(x)/\sqrt{x} < -1.7$ holds for infinitely many positive integers $x$.

In \cite[Conj.\ 1.1]{Sun}, Sun hypothesized that both $L_1(x)$ and $H_1(x)$ are $O_{\epsilon}(x^{-\frac{1}{2} + \epsilon})$ for any $\epsilon>0$.
Both of these statements seem likely to be true, since either of these is equivalent to the Riemann hypothesis (see also \cite{OBMR,MosTru}).
Sun also remarked there that $H_1(x)$ may be $O(x^{-\frac{1}{2}})$, but by the same argument employed in \cite[\S5]{MT2017} for $H_0(x)/\sqrt{x}$, bounding $H_1(x)\sqrt{x}$ would again imply the existence of infinitely many linear dependence relations among the ordinates of the zeros of the zeta function in the upper half plane, so a bound of this strength seems doubtful.

In this paper, we prove that large oscillations must exist in $H_{\alpha}(x)x^{\alpha-\frac{1}{2}}$ for every $\alpha\in[0, 1]$.
This presents an $\omega$-complement to the $\Omega$ work of \cite{MosTru} and \cite{MT2017}, and generalizes the oscillation result concerning the case $\alpha=0$ from \cite{Dickens} in a natural way.
For $\sigma>1$ and $s=\sigma+it$, let $h(s)$ denote the Dirichlet series
\begin{equation}\label{eqnhDef}
h(s) = \sum_{n\geq 1} \frac{(-1)^{\omega(n)}}{n^s} = \prod_p \biggl(1-\sum_{k\geq1}\frac{1}{p^{ks}}\biggr) = \zeta(s)\prod_p\left(1-\frac{2}{p^s}\right).
\end{equation}
The series $h(1)$ was investigated by van de Lune and Dressler in 1975 \cite{Dressler}, who proved that its value is $0$.
Following \cite[\S5]{Dickens}, we may write the Euler product above as
\begin{equation}\label{chalk}
h(s) = \frac{F_6(s)}{Z_6(s)},
\end{equation}
where
\begin{equation*}\label{eqnF6}
F_{6}(s) = \prod_{p} \left(1 - \frac{18}{p^{7s}} - \frac{30}{p^{8s}} - \frac{56}{p^{9s}} - \cdots \right)
\end{equation*}
is an explicitly given function that converges in the half plane $\sigma>1/7$,
and
\begin{equation*}\label{eqnZ6}
Z_6(s) = \zeta(s) \zeta(2s)\zeta^{2}(3s) \zeta^{3}(4s) \zeta^{6}(5s) \zeta^{9}(6s),
\end{equation*}
so that $h(s)$ can be continued analytically to the left of the $\sigma=1$ line.
(We remark as in \cite{Dickens} that there is no trouble in extending the form in \eqref{chalk} so that the corresponding function $F_{k}(s)$ converges on $\sigma > 1/(k+1)$ for larger values of $k$.)
For convenience we write $h(\alpha)$ in place of $F_6(\alpha)/Z_6(\alpha)$ for $\alpha\in(\frac{1}{2},1)$ throughout.
For $0\leq \alpha\leq 1$, we define $\mathcal{H}_{a}(x)$ by
\begin{equation}\label{eqnHcal}
\mathcal{H}_{a}(x)=
\begin{cases}  H_{\alpha}(x), & \textrm{if $0 \leq \alpha \leq \frac{1}{2}$},\\
H_{\alpha}(x) - h(\alpha), & \textrm{if $\frac{1}{2} <\alpha \leq 1$}.
\end{cases}
\end{equation}
We establish the following theorem.

\begin{theorem}\label{thmHosc}
For each $\alpha\in[0,1]$, each of the following inequalities holds for infinitely many positive integers $x$:
\[
\mathcal{H}_\alpha(x) x^{\alpha-\frac{1}{2}} < -1.7, \quad \mathcal{H}_{\alpha}(x) x^{\alpha-\frac{1}{2}} > 1.7.
\]
\end{theorem}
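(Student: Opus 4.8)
The plan is to adapt the method used for the case $\alpha=0$ in \cite[\S5]{Dickens} (which itself builds on \cite[\S5]{MT2017} and, ultimately, on Ingham's technique), and to carry it out uniformly in the parameter $\alpha$. For $\mathrm{Re}(s)$ large one has the Perron-type identity $\int_1^\infty H_\alpha(x)\,x^{-s-1}\,dx=h(s+\alpha)/s$, and the factorization $h(s)=F_6(s)/Z_6(s)$ from \eqref{chalk} continues $h(s+\alpha)/s$ to a meromorphic function on $\mathrm{Re}(s)>\tfrac17-\alpha$. Inverting the Mellin transform and moving the contour to the left of the line $\mathrm{Re}(s)=\tfrac12-\alpha$, I expect a representation of the shape
\[
\mathcal{H}_\alpha(x)\,x^{\alpha-\frac12}=\Sigma_\alpha(\log x)+E_\alpha(x),\qquad \Sigma_\alpha(y)=\sum_{0<\gamma<T}2\,\mathrm{Re}\!\left(\frac{\Res_{s=\rho}h(s)}{\rho-\alpha}\,e^{i\gamma y}\right),
\]
where $\rho=\tfrac12+i\gamma$ runs over the nontrivial zeros of $\zeta$ with ordinate $\gamma\in(0,T)$, and the pole at $s=0$ is absorbed by the subtraction in \eqref{eqnHcal} for $\alpha>\tfrac12$ and is negligible for $\alpha\le\tfrac12$. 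The other singularities of $h(s+\alpha)/s$ --- arising from the factors $\zeta(2s),\dots,\zeta(6s)$ of $Z_6$ and from trivial zeros --- have real part at most $\tfrac14-\alpha$, so after normalization they decay and are swallowed by $E_\alpha$. Controlling $E_\alpha$ is the routine analytic part, requiring standard upper bounds for $1/\zeta$ and its siblings on vertical lines; to keep $E_\alpha$ genuinely small for a fixed cutoff $T$ one works, as in \cite{Dickens}, with a smoothed average against a nonnegative kernel $k$ whose Fourier transform is supported in $[-T,T]$, which both suppresses the tail over zeros with $|\gamma|\ge T$ and is bounded above by $\sup_z\mathcal H_\alpha(e^z)e^{(\alpha-1/2)z}$. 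If the Riemann hypothesis should fail, a zero of $\zeta$ with real part exceeding $\tfrac12$ forces $\mathcal H_\alpha(x)x^{\alpha-1/2}$ to be unbounded in both directions --- by a Landau-type oscillation theorem for the Dirichlet integral of $\mathcal H_\alpha$, whose only real singularity to the right of $\tfrac12-\alpha$ would have to be a real zero of $\zeta$ in the critical strip, which does not exist --- so in proving the theorem one may assume the finitely many zeros that enter $\Sigma_\alpha$ are on the critical line and simple, as has been verified computationally.

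With this representation in hand, the theorem reduces to showing, for a suitable finite $T$ (and the corresponding smoothed sum), that $\sup_y\Sigma_\alpha(y)>1.7$ and $\inf_y\Sigma_\alpha(y)<-1.7$; since $\Sigma_\alpha$ is an almost periodic function its supremum equals its limit superior, so this yields the inequalities for infinitely many $x$. As in \cite[\S5]{Dickens} I would establish the needed bound on $\sup_y\Sigma_\alpha(y)$ (and symmetrically on $\inf_y\Sigma_\alpha(y)$, i.e.\ on $\sup_y(-\Sigma_\alpha(y))$) by a direct computation over the first several thousand ordinates $\gamma$, evaluating the residues $\Res_{s=\rho}h(s)=F_6(\rho)/Z_6'(\rho)$ from precomputed values of $\zeta'(\rho)$ and of the products $\zeta(2\rho)\zeta^2(3\rho)\zeta^3(4\rho)\zeta^6(5\rho)\zeta^9(6\rho)$, and exhibiting a point $y_0$ (together with a rigorous bound on the tail and on $E_\alpha$) at which the value safely exceeds $1.7$ in the required direction.

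The substantive new ingredient is the uniformity in $\alpha\in[0,1]$, which I expect to be the main obstacle: the computation above must be arranged to work for every $\alpha$, not just for $\alpha=0$. The key structural observation is that for $\rho=\tfrac12+i\gamma$ one has $|\rho-\alpha|\le|\rho|$ for all $\alpha\in[0,1]$ (with equality only at the endpoints), so each residue amplitude $|\Res_{s=\rho}h(s)/(\rho-\alpha)|$ dominates its value at $\alpha=0$; if the quantitative estimate can be arranged to depend only on these amplitudes, the bound for every $\alpha$ reduces to the one already obtained for $\alpha=0$ in \cite{Dickens}. Where the argument is sensitive to the phases $\arg\!\big(\Res_{s=\rho}h(s)/(\rho-\alpha)\big)$ --- which do vary with $\alpha$ --- I would use that $1/(\rho-\alpha)=\rho^{-1}+O_\alpha(\gamma^{-2})$ uniformly in $\alpha$, so that only the handful of low-lying zeros are appreciably affected, and close the gap by a Lipschitz estimate in $\alpha$ together with a check over a sufficiently fine net of values of $\alpha$ in $[0,1]$. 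Finally, the finitely many exceptional points $\alpha\in\{\tfrac12,\tfrac13,\tfrac14,\tfrac15,\tfrac16\}$, at which a factor $\zeta(k\alpha)$ has a pole and $h(\alpha)=0$, must be handled by interpreting \eqref{eqnHcal} and the residue formulas via limits; they contribute no new oscillatory content and only a little extra bookkeeping.
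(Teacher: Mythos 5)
Your overall architecture matches the paper's in outline: the Laplace/Mellin transform of $\mathcal{H}_\alpha(e^u)e^{(\alpha-\frac{1}{2})u}$, the continuation via $h=F_6/Z_6$, the reduction to the case where RH holds and the zeros are simple, the smoothing against a kernel whose Fourier transform is supported in $[-T,T]$, and---crucially---the observation that $\abs{\rho-\alpha}\le\abs{\rho}$ for $\alpha\in[0,1]$. But the step that actually produces the constant $1.7$ is wrong as you describe it, and it is the heart of the proof. You propose to bound $\sup_y\Sigma_\alpha(y)$ by ``exhibiting a point $y_0$\ldots at which the value safely exceeds $1.7$,'' with the error $E_\alpha$ controlled pointwise by ``standard upper bounds for $1/\zeta$ on vertical lines.'' Neither half of this works. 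The sum of $\abs{\Res}$ over all zeros is not known to converge, so there is no pointwise bound on the tail over $\abs{\gamma}\ge T$; the kernel smoothing does suppress that tail, but then the quantity you control is the limsup of the \emph{kernel-weighted} finite sum $\sum_{\gamma}k_T(\gamma)r_\gamma e^{i\gamma u}$, and the entire difficulty is to show this limsup is close to $\sum_{\gamma}k_T(\gamma)\abs{r_\gamma}$, i.e.\ that the $239$ phases can be nearly aligned along the orbit. That requires an independence input on the ordinates. The paper (following \cite{Dickens}) does not exhibit a point: it uses the fact, verified by lattice reduction, that a specific set $\Gamma'$ of $239$ ordinates is $3950$-independent in $\Gamma\cap[0,\gamma_{3701}-10^{-10}]$, and feeds this into the Anderson--Stark theorem \eqref{eqnAndStark} to get $\frac{2N}{N+1}\sum_{\gamma\in\Gamma'}k_T(\gamma)\abs{\Res(G_0,i\gamma)}>1.700144$. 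Note the margin over $1.7$ is about $10^{-4}$: even a successful phase-alignment search at a single point would almost certainly fall short of the stated constant.

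The independence route also dissolves the $\alpha$-uniformity issue you identify as ``the main obstacle.'' The Anderson--Stark bound depends only on the residue magnitudes $\abs{\Res(G_\alpha,i\gamma)}$ and never on their phases, so $\abs{\rho_n-\alpha}\le\abs{\rho_n}$ immediately gives $\abs{\Res(G_\alpha,i\gamma)}\ge\abs{\Res(G_0,i\gamma)}$ and hence transfers the $\alpha=0$ bound to every $\alpha\in[0,1]$ in one line; no net over $\alpha$, no Lipschitz estimate, and no recomputation are needed. Your proposed phase-sensitive repair (re-searching for an aligned point over a fine net of $\alpha$'s) is precisely the work the paper's phase-free argument avoids. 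A minor further correction: the only genuinely delicate parameter is $\alpha=\tfrac12$, where the pole of $h_\alpha(s)=h(s+\tfrac12)/(s-\alpha+\tfrac12)$ at $s=0$ is cancelled by $h(\tfrac12)=0$; the values $\alpha=\tfrac13,\tfrac14,\tfrac15,\tfrac16$ lie in the range where no $h(\alpha)$-correction is made and cause no difficulty, so your list of exceptional points is spurious.
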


Sun made some additional conjectures regarding sums involving $\Omega(n)$ in \cite{Sun}, which we also address here.
Let
\begin{equation}\label{eqnSalpha}
S_\alpha(x) = \sum_{n\leq x}\frac{(-1)^{n - \Omega(n)}}{n^\alpha}.
\end{equation}
In \cite[Hypoth.\ 1.1]{Sun}, Sun made the following four hypotheses:
\begin{gather}
S_0(x) > 0 \textrm{\;\ for $x\geq 5$,}\label{Sol1}\\
S_1(x) < 0 \textrm{\;\ for $x\geq 1$,}\label{Sol1b}\\
1 < \frac{S_0(x)}{\sqrt{x}} < 2.3 \textrm{\;\ for $x\geq 325$,}\label{Sol2}\\
-2.3 < S_1(x)\sqrt{x} < -1 \textrm{\;\ for $x\geq 3$.}\label{Sol3}
\end{gather}
Certainly \eqref{Sol2} and \eqref{Sol3} are stronger than \eqref{Sol1} and \eqref{Sol1b} respectively for sufficiently large $x$, and clearly \eqref{Sol2} and \eqref{Sol3} appear related.
Sun reported that \eqref{Sol2} was checked for $x\leq10^{11}$ and \eqref{Sol3} for $x\leq2\cdot 10^{9}$.

We disprove all four of these statements in this article.
In two of the conjectured inequalities above---the lower bounds in \eqref{Sol2} and \eqref{Sol3}---we also determine the minimal counterexample, after calculating these functions for $x\leq7.5\cdot10^{14}$.
We summarize some of the results of our computations in the following theorem.

\begin{theorem}\label{thmSComps}
The following statements hold for the function $S_{\alpha}(x)$ from \eqref{eqnSalpha}.
\begin{enumerate}[(a)]
\item The smallest integer $x\geq325$ for which $S_0(x) \leq \sqrt{x}$ is $x_0=702469704523413$.
Here, $S_0(x_0)=26504145$, so $\sqrt{x_0}-S_0(x_0) = 0.044189\ldots$\,.
\item The minimal value of $S_0(x)/\sqrt{x}$ for integers $x$ with $324\leq x\leq 7.5\cdot 10^{14}$ is $0.9876\ldots$\,, occurring at $x_0^*=702494078543084$, and $S_0(x_0^*)=26175950$.
\item The smallest integer $x\geq1$ for which $S_1(x)\sqrt{x} \leq -2.3$ is $x_1=702475225213517$, at which $S_1(x_1)\sqrt{x_1}=-2.300000012\ldots$\,.
\item The minimal value of $S_1(x)\sqrt{x}$ for integers $x\leq 7.5\cdot 10^{14}$ occurs at $x_1^*=702494078542878$, and $S_1(x_1^*)\sqrt{x_1^*} = -2.3108948\ldots$\,.
\end{enumerate}
\end{theorem}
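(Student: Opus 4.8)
The four assertions are purely computational, and the plan is to evaluate $S_0(x)$ and $S_1(x)$ at \emph{every} integer $x$ up to $N:=7.5\cdot 10^{14}$, carrying running totals and recording along the way the first $x\ge 325$ with $S_0(x)\le\sqrt{x}$, the first $x\ge 1$ with $S_1(x)\sqrt{x}\le -2.3$, and the running minima of $S_0(x)/\sqrt{x}$ and of $S_1(x)\sqrt{x}$ over the range. The key reduction is that
\[
(-1)^{n-\Omega(n)}=(-1)^{n}(-1)^{\Omega(n)}=(-1)^{n}\lambda(n),
\]
where $\lambda$ is the Liouville function, so that the update from $x-1$ to $x$ is $S_\alpha(x)=S_\alpha(x-1)+(-1)^{x-\Omega(x)}/x^{\alpha}$ and needs only the single bit $\Omega(x)\bmod 2$ together with the (known) parity of $x$.

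To supply these bits for all $n\le N$ one runs a segmented sieve. Precompute the primes up to $\sqrt{N}\approx 2.74\cdot 10^{7}$; then process $[1,N]$ in consecutive blocks $[M+1,M+B]$, with $B$ chosen to fit in memory, sieving out each prime power $p^{k}\le M+B$ with $p\le\sqrt{M+B}$ to build $\Omega(n)\bmod 2$ restricted to small primes, and correcting for the at most one remaining large prime factor of each $n$ via its residual cofactor. This is the standard way to tabulate $\lambda(n)$ over a long range; its cost is essentially one linear pass over $[1,N]$, the blocks may be processed in parallel, and one simply carries the values $S_0(M)$, $S_1(M)$ and the current extrema from one block to the next. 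As a byproduct one obtains a verification of Sun's inequalities \eqref{Sol2} and \eqref{Sol3} over the ranges where they do hold, and one checks along the way that $S_0(x)>0$ throughout $5\le x\le N$.

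What needs care is rigor in the comparisons, since $\sqrt{x}$ and the constant $2.3$ are irrational. For parts (a) and (b) there is nothing to worry about: $S_0(x)$ is an \emph{integer}, so ``$S_0(x)\le\sqrt{x}$'' with $S_0(x)\ge 0$ is exactly the integer inequality $S_0(x)^{2}\le x$ (and since $S_0(x)>0$ on $[5,N]$, the first $x\ge 325$ with $S_0(x)\le\sqrt{x}$ is a genuine crossing of $\sqrt{x}$), while the minimum of $S_0(x)/\sqrt{x}$ is obtained by tracking the minimum of the exact rational $S_0(x)^{2}/x$, the reported value $0.9876\ldots$ being $\sqrt{S_0(x_0^{*})^{2}/x_0^{*}}$. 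For parts (c) and (d) the quantity $S_1(x)=\sum_{n\le x}(-1)^{n}\lambda(n)/n$ is rational with an astronomically large denominator and cannot be kept exactly; instead one accumulates it in extended precision (for instance in a double--double representation) while maintaining a rigorous running bound $E_x$ on the total rounding error, chosen small enough that $E_x\sqrt{x}$ stays well below the $\approx 1.2\cdot 10^{-8}$ by which the value $-2.300000012\ldots$ at $x_1$ clears the threshold $-2.3$. One then certifies $S_1(x)\sqrt{x}\le -2.3$ at the first $x$ where $(S_1(x)+E_x)\sqrt{x}\le -2.3$, and locates the minimum in (d) the same way.

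The obstacle here is not conceptual but one of scale and discipline: sieving the parity of $\Omega(n)$ for every $n$ up to $7.5\cdot 10^{14}$ is a large computation, and the extended-precision accumulation of $S_1$ must be controlled tightly enough to certify a near-equality at the $10^{-8}$ level and to pin down the \emph{exact} minimal counterexamples in (a) and (c) rather than merely approximate ones. Once the sieve and the error analysis are in place, assertions (a)--(d) follow by inspecting the recorded values.
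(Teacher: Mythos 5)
Your proposal is correct and is essentially the paper's own method: a segmented sieve tabulating the parity of $\Omega(n)$ over $[1,7.5\cdot10^{14}]$, with running accumulation of $S_0$ and $S_1$ to locate the first crossings and the extrema (the paper's implementation differs only in details, e.g.\ a precomputed parity table up to $5\cdot10^{10}$ on a wheel mod $30$ and separate accumulation of positive and negative contributions to control precision). Your added care about exact integer comparison for $S_0$ and rigorous rounding bounds for $S_1$ is sensible and consistent with what the computation requires.
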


In addition, we determine lower bounds on the oscillations exhibited by $S_0(x)/\sqrt{x}$ and $S_1(x)\sqrt{x}$.
These results show that each of the conjectured bounds in \eqref{Sol1}, \eqref{Sol1b}, \eqref{Sol2}, and \eqref{Sol3} is violated infinitely often.
Our method in fact allows us to determine information on $S_\alpha(x)$ for any $\alpha\in[0,1]$.
For this, we define
\begin{equation}\label{eqnScriptS}
\mathcal{S}_\alpha(x) = \begin{cases}
\displaystyle S_\alpha(x), & \textrm{if $0\leq\alpha<1/2$ or $\alpha=1$},\\
\displaystyle S_\alpha(x) + \frac{1+\sqrt{2}}{2\zeta(1/2)}\log x, & \textrm{if $\alpha=1/2$},\\[8pt]
\displaystyle S_\alpha(x) + \frac{(1+2^{-\alpha})\zeta(2\alpha)}{\zeta(\alpha)}, & \textrm{if $1/2<\alpha<1$.}
\end{cases}
\end{equation}
We determine bounds on oscillations for $\mathcal{S}_\alpha(x)$ for all $\alpha\in[0,1]$ in the following theorem.

\begin{theorem}\label{thmSAsymp}
For each $\alpha\in[0,1/2)\cup(1/2,1]$, each of the following inequalities is satisfied for infinitely many positive integers $x$:
\begin{equation}\label{eqnSaGen}
\begin{split}
\mathcal{S}_\alpha(x)x^{\alpha - \frac{1}{2}} &< \frac{1+\sqrt{2}}{(2\alpha-1)\zeta(1/2)} - 1.6725193,\\
\mathcal{S}_\alpha(x)x^{\alpha - \frac{1}{2}} &> \frac{1+\sqrt{2}}{(2\alpha-1)\zeta(1/2)} + 1.6725193.
\end{split}
\end{equation}
In particular, each of the following holds for infinitely many positive integers $x$:
\begin{equation}\label{eqnSAsymp}
\begin{split}
\mathcal{S}_0(x)x^{-1/2} < -0.019349, &\quad \mathcal{S}_0(x)x^{-1/2} > 3.32568,\\
\mathcal{S}_{1/4}(x)x^{-1/4} < 1.63369, &\quad \mathcal{S}_{1/4}(x)x^{-1/4} > 4.97900,\\
\mathcal{S}_{3/4}(x)x^{1/4} < -4.97900, &\quad \mathcal{S}_{3/4}(x)x^{1/4} > -1.63369,\\
\mathcal{S}_1(x)x^{1/2} < -3.32568, &\quad \mathcal{S}_1(x)x^{1/2} > 0.019349.
\end{split}
\end{equation}
In addition, each of the following inequalities is satisfied for infinitely many positive integers $x$:
\begin{equation}\label{eqnSaHalf}
\mathcal{S}_{1/2}(x) < -3.27438, \quad \mathcal{S}_{1/2}(x) > 0.071048.
\end{equation}
\end{theorem}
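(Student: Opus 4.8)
The plan is to run an Ingham-type oscillation argument for the Dirichlet series attached to $S_\alpha$ and then to invoke the effective (Anderson--Stark) form of Ingham's theorem, exactly in the spirit of \cite{Best}, \cite{MT2017} and \cite{Dickens}. Since $(-1)^{n-\Omega(n)}=(-1)^{n}\lambda(n)$ with $\lambda=(-1)^{\Omega}$ the Liouville function, splitting $n$ according to parity and using complete multiplicativity of $\lambda$ gives $S_\alpha(x)=-L_\alpha(x)-2^{1-\alpha}L_\alpha(x/2)$ and, on the level of Dirichlet series,
\[
\sum_{n\ge1}\frac{(-1)^{n}\lambda(n)}{n^{s}}=-\bigl(1+2^{1-s}\bigr)\frac{\zeta(2s)}{\zeta(s)}=:\mathfrak{S}(s)\qquad(\Re s>1).
\]
This continues meromorphically past $\Re s=1$ with a simple pole at $s=\tfrac12$ of residue $-(1+\sqrt2)/(2\zeta(\tfrac12))$ and, apart from poles at the negative even integers, poles exactly at the nontrivial zeros $\rho$ of $\zeta$, with $\Res_{s=\rho}\mathfrak{S}(s)=-(1+2^{1-\rho})\zeta(2\rho)/\zeta'(\rho)$ at a simple zero. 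The function $\mathcal{S}_\alpha$ is precisely $S_\alpha$ with its non-oscillatory polar contribution removed so that $\mathcal{S}_\alpha(x)x^{\alpha-1/2}$ oscillates about the centre $\tfrac{1+\sqrt2}{(2\alpha-1)\zeta(1/2)}$: for $\alpha>\tfrac12$ one subtracts the value $\mathfrak{S}(\alpha)$ of the analytic continuation (equal to $\lim_{x\to\infty}S_\alpha(x)$ when RH holds, and vanishing at $\alpha=1$), for $\alpha=\tfrac12$ one subtracts the $\log x$ term coming from the double pole of $\mathfrak{S}(\tfrac12+s)x^{s}/s$ at $s=0$, and for $\alpha\in[0,\tfrac12)$ no correction is needed because the pole at $s=\tfrac12$ contributes exactly $\tfrac{1+\sqrt2}{(2\alpha-1)\zeta(1/2)}x^{1/2-\alpha}$, which becomes the centre after multiplication by $x^{\alpha-1/2}$.

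Next I would derive the explicit formula. Perron's formula together with a contour shift (justified through a smoothed kernel as in \cite{MT2017} and \cite{Dickens}) gives, for $\alpha\neq\tfrac12$,
\[
\mathcal{S}_\alpha(x)\,x^{\alpha-\frac12}=\frac{1+\sqrt2}{(2\alpha-1)\zeta(1/2)}+2\sum_{\gamma>0}\Re\!\left(\frac{-(1+2^{1-\rho})\,\zeta(2\rho)}{(\rho-\alpha)\,\zeta'(\rho)}\,x^{i\gamma}\right)+o(1),
\]
where $\rho=\tfrac12+i\gamma$ ranges over the nontrivial zeros; if the Riemann hypothesis fails there is a zero with $\Re\rho>\tfrac12$, producing a term of order $x^{\Re\rho-1/2}$ that swamps the asserted inequalities, so we may assume RH. For $\alpha=\tfrac12$ the same computation gives $\mathcal{S}_{1/2}(x)=g_0+2\sum_{\gamma>0}\Re\bigl(\tfrac{-(1+2^{1-\rho})\zeta(2\rho)}{i\gamma\,\zeta'(\rho)}x^{i\gamma}\bigr)+o(1)$, where $g_0$ is the explicit constant from the Laurent expansion of $\mathfrak{S}(\tfrac12+s)x^{s}/s$ at $s=0$ (one finds $g_0=-1.6017\ldots$, the midpoint of the interval in \eqref{eqnSaHalf}). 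The decisive structural observation is that the complex amplitude of the $\gamma$-term depends on $\alpha$ only through $1/(\rho-\alpha)$, and for $\alpha\in[0,1]$ one has $|\rho-\alpha|=\sqrt{(\tfrac12-\alpha)^{2}+\gamma^{2}}\in[\gamma,\sqrt{\gamma^{2}+\tfrac14}\,]$; hence all the amplitudes are, uniformly in $\alpha$, within a factor $1+O(\gamma^{-2})$ of their values at $\alpha=\tfrac12$, which is exactly what makes an $\alpha$-uniform oscillation constant possible.

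To convert this into the oscillation bounds I would apply the effective version of Ingham's theorem used in \cite{Best} and \cite{MT2017}. Fix a truncation height $T$ and Fej\'er-weight the (finitely many) zeros with $0<\gamma\le T$; the triangular weights $1-\gamma/T$ control the tail automatically inside the relevant $\limsup$, giving
\[
\limsup_{x\to\infty}\Bigl(\mathcal{S}_\alpha(x)x^{\alpha-\frac12}\Bigr)\ \ge\ \frac{1+\sqrt2}{(2\alpha-1)\zeta(1/2)}+\sup_{u\in\mathbb{R}}\ 2\!\!\sum_{0<\gamma\le T}\!\!\Bigl(1-\frac{\gamma}{T}\Bigr)\Re\!\left(\frac{-(1+2^{1-\rho})\zeta(2\rho)}{(\rho-\alpha)\zeta'(\rho)}\,e^{i\gamma u}\right),
\]
and, applied to $-\mathcal{S}_\alpha$, an analogous upper bound for the $\liminf$. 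The Anderson--Stark device then supplies a \emph{computable} lower bound for this supremum over $u$ with no assumption of $\mathbb{Q}$-linear independence of the $\gamma$: one exhibits, via a finite optimization over the leading zeros, a value of $u$ making the weighted sum large. Carrying this out with rigorously certified values of $\gamma$, $\zeta'(\rho)$ and $\zeta(2\rho)=\zeta(1+2i\gamma)$ for the zeros up to a sufficiently large $T$, and taking the least favourable $\alpha\in\{0,1\}$ (where $|\rho-\alpha|$ is largest, hence the amplitudes smallest) so that the bound is uniform over $[0,\tfrac12)\cup(\tfrac12,1]$, should yield the constant $1.6725193$ of \eqref{eqnSaGen} and therefore the numerical statements in \eqref{eqnSAsymp}; repeating the computation with $|\rho-\alpha|$ replaced by $\gamma$ and the centre replaced by $g_0$ yields \eqref{eqnSaHalf}.

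The step I expect to be the main obstacle is precisely this last, effective computation: compared with the Mertens-function prototype each zero now carries the extra factor $(1+2^{1-\rho})\zeta(2\rho)$, so one needs rigorous interval-arithmetic estimates for $\zeta$ at height $2\gamma$ on the line $\Re s=1$ in addition to the usual certified values of $\gamma$ and $\zeta'(\rho)$, together with an explicit bound for the Fej\'er-weighted tail $\sum_{\gamma>T}$ that is uniform in $\alpha$; and one must check that the Anderson--Stark optimization, run both for large positive and for large negative values of the trigonometric sum, and at both $\alpha\in\{0,1\}$ and $\alpha=\tfrac12$, delivers the stated constants on \emph{both} sides of the centre. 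The remaining ingredients---the parity identity, the contour shift and smoothing, the reduction to RH, and the verification that $\mathfrak{S}$ satisfies the growth hypotheses needed for Ingham's theorem---are routine and parallel to \cite{Dickens} and \cite{MT2017}.
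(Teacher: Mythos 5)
Your overall strategy coincides with the paper's: write $(-1)^{n-\Omega(n)}$ via the Liouville function, identify the Dirichlet series $-(1+2^{1-s})\zeta(2s)/\zeta(s)$, strip off the polar contributions to form $\mathcal{S}_\alpha$, reduce to RH and simple zeros, and apply an effective Ingham-type bound to the Laplace transform of $\mathcal{S}_\alpha(e^u)e^{(\alpha-1/2)u}$, using $|\gamma|\le|\rho-\alpha|\le|\rho|$ to get uniformity in $\alpha$. The genuine gap is in how you describe the decisive computational input. The Anderson--Stark bound \eqref{eqnAndStark} is not obtained by ``exhibiting a value of $u$ making the weighted sum large'' via a finite optimization over the leading zeros; it requires one to \emph{prove} that a chosen finite set $\Gamma'$ of ordinates is $N$-independent in $\Gamma\cap[0,T]$ (no nontrivial integer relations with coefficients bounded by $N$), a lattice-reduction computation on the $\gamma$'s themselves, after which the full sum of weighted absolute residues (times $\tfrac{N}{N+1}$) is guaranteed on both sides of the centre. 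The paper performs no new such verification: it imports from \cite{MT2017} a set of $250$ ordinates that is $3100$-independent in $[0,\gamma_{3701}-10^{-10}]$ and then only evaluates the residues \eqref{sorghum} over that fixed set. Searching for a single good $u$ is a different (and for sums of $250$ terms, hopeless) route to the stated constants, so as written your key step would not deliver the theorem.

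Two further corrections. The paper uses the Jurkat--Peyerimhoff kernel \eqref{eqnJPkernel}, not the Fej\'er kernel; the Fej\'er weights would not reach $1.6725193$. And the uniform constant $1.6725193$ (the worst case, attained at $\alpha\in\{0,1\}$) is not sufficient for the $\alpha=1/4$ and $\alpha=3/4$ lines of \eqref{eqnSAsymp}: the centre at $\alpha=1/4$ is $3.30634\ldots$, and $3.30634-1.6725193=1.63382>1.63369$, so those lines need the slightly larger oscillation bound $1.6726690$ computed at those specific $\alpha$, exactly as the paper does. Your separate treatment of $\alpha=1/2$, with the amplitude denominators $|\gamma|$ and the corrected centre from the double pole, matches the paper.
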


In this article we investigate another conjecture made by Sun as well in \cite{Sun}.
Let
\begin{equation}\label{eqnW}
W(x) = \sum_{n\leq x} (-2)^{\Omega(n)}.
\end{equation}
The similar function $W_+(x) = \sum_{n\leq x} 2^{\Omega(n)}$ was considered by Grosswald \cite{Grosswald56} and by Bateman \cite{Gordon}: in particular, Grosswald proved that $\abs{W_+(x)} \ll x\log^2 x$.
In \cite[Conj.\ 1.1]{Sun} Sun conjectured that a smaller and explicit bound holds for $W(x)$:
\begin{equation}\label{Sol4}
\abs{W(x)} < x  \textrm{\;\ for $x\geq 3078$}.
\end{equation}
We investigate this function here, and verify that \eqref{Sol4} holds for $x\leq2.5\cdot10^{14}$.

This article is organized in the following way.
Section~\ref{secWeakInd} describes the notion of weak independence of a set of real numbers, and its use in establishing lower bounds on the oscillation of sums of certain arithmetic functions.
Section~\ref{secS} presents the analysis of the functions $S_\alpha(x)$, and establishes Theorem~\ref{thmSAsymp}.
Following this, Section~\ref{secH} considers the family $H_\alpha(x)$, and proves Theorem~\ref{thmHosc}.
Details on the computations that establish Theorem~\ref{thmSComps} and related facts, along with information on calculations for $W(x)$, are described in Section~\ref{secComps}.
Finally, in Section~\ref{secMoreConj} we touch on some other conjectures of Sun from \cite{Sun}.

\section{Weak independence and oscillations}\label{secWeakInd}

The work of Grosswald \cite{Grosswald67,Grosswald72}, Bateman et al.\ \cite{Bateman}, Diamond \cite{Diamond}, Anderson and Stark \cite{AndStark}, and others establishes connections between bounding oscillations in sums of certain arithmetic functions and the existence of integral linear relations among the ordinates of nontrivial zeros of $\zeta(s)$, where the linear relations have bounded coefficients.
For example, in 1971 Bateman et al.\ \cite{Bateman} showed that if the Mertens function \eqref{eqnMertens} satisfied $\abs{M(x)}\leq\sqrt{x}$ for all $x>0$ then there would exist infinitely many linear relations among the nontrivial zeros of $\zeta(s)$ in the upper half plane with maximum coefficient $2$. They showed as well that the ordinates of the first $20$ zeros of $\zeta(s)$ on the critical line are $1$-independent, that is, there exist no nontrivial linear relations among these values using coefficients $\{-1,0,1\}$.
In a more recent study of oscillations in the Mertens function \cite{Best}, it was shown that the first $500$ zeros of $\zeta(s)$ are $10^{5}$-independent.

Here, in order to exhibit large oscillations in sums we require a particular form of weak linear independence for certain zeros of $\zeta(s)$.
We assume the Riemann hypothesis and the simplicity of its zeros here and throughout this paper, as unbounded oscillations would follow anyway if either of these conditions did not hold.
Let $\Gamma$ denote a set of positive real numbers, and for $T>1$ let $\Gamma'=\Gamma'(T)$ denote a subset of $\Gamma \cap [0,T]$.
For each $\gamma\in\Gamma'$, let $N_\gamma$ denote a positive integer.
We say $\Gamma'$ is \textit{$\{N_\gamma\}$-independent} in $\Gamma \cap [0, T]$ if two conditions hold.
First, whenever
\begin{equation*}\label{ind1}
\sum_{\gamma\in\Gamma'} c_{\gamma} \gamma = 0, \textrm{\;\ with $\abs{c_{\gamma}} \leq N_\gamma,\; c_{\gamma} \in \mathbb{Z}$},
\end{equation*}
then necessarily all $c_{\gamma} = 0$.
Second, for any $\gamma^{*}\in \Gamma \cap [0, T]$, if
\begin{equation*}\label{ind2}
\sum_{\gamma\in\Gamma'} c_{\gamma} \gamma = \gamma^{*}, \textrm{\;\ with $\abs{c_{\gamma}} \leq N_\gamma,\; c_{\gamma} \in \mathbb{Z}$},
\end{equation*}
then $\gamma^{*} \in \Gamma'$, $c_{\gamma*} = 1$, and all other $c_{\gamma} = 0$.
That is, the only linear relations with small coefficients here are the trivial ones.
In addition, if $N$ is a positive integer with the property that each $N_\gamma \geq N$ with $\{N_\gamma\}$ as above, then we say that $\Gamma'$ is \textit{$N$-independent} in $\Gamma \cap [0, T]$.

Anderson and Stark \cite{AndStark} established a means for bounding oscillations by using weak independence.
We summarize a particular result of theirs: suppose $g(u)$ is a piecewise continuous, real function, bounded on finite intervals, with Laplace transform $G(s)$,
\[
G(s) = \int_0^\infty g(u) e^{-su}\,du.
\]
Suppose also that $G(s)$ is absolutely convergent in $\sigma>\sigma_0$ for some $\sigma_0$, and can be analytically continued back to $\sigma\geq0$, except for simple poles occurring at $\pm i\gamma$ for $\gamma\in\Gamma$, for a particular set $\Gamma$ of positive real numbers, and possibly a simple pole at $0$ as well.
If $\Gamma'\subseteq\Gamma$ is $\{N_\gamma\}$-independent in $\Gamma\cap[0,T]$, then
\begin{equation}\label{eqnAndStark}
\begin{split}
\liminf_{u\to\infty} g(u) &\leq \Res(G,0) - 2\sum_{\gamma\in\Gamma'} \frac{N_\gamma}{N_\gamma+1} k_T(\gamma) \abs{\Res(G,i\gamma)},\\
\limsup_{u\to\infty} g(u) &\geq \Res(G,0) + 2\sum_{\gamma\in\Gamma'} \frac{N_\gamma}{N_\gamma+1} k_T(\gamma) \abs{\Res(G,i\gamma)},
\end{split}
\end{equation}
where $k_T(x)$ denotes an admissible weight function, which must be even, nonnegative, supported on $[-T,T]$, have the value $1$ at $x=0$, and be the Fourier transform of a nonnegative function.
While the Fej\'er kernel ($k_T(x) = 1-\abs{x}/T$ for $\abs{x}\leq T$ and $0$ otherwise) is admissible, we employ the kernel of Jurkat and Peyerimhoff \cite{JP} here, defined by
\begin{equation}\label{eqnJPkernel}
k_T(x) = \begin{cases}
\displaystyle \left(1-\frac{\abs{x}}{T}\right)\cos\left(\frac{\pi x}{T}\right) + \frac{1}{\pi}\sin\left(\frac{\pi\abs{x}}{T}\right), & \textrm{if $\abs{x}\leq T$},\\
\displaystyle 0, &  \textrm{if $\abs{x} > T$}.
\end{cases}
\end{equation}
This kernel provides more weight to values of $x$ near the origin compared to the Fej\'er kernel (and correspondingly less weight to values farther away), and this is often advantageous.

The result \eqref{eqnAndStark} then provides a weak version of Ingham's theorem for the case of weak independence.
Ingham established a result similar to \eqref{eqnAndStark}, where linear independence over the rationals for $\Gamma$ was required, the fractions $N_\gamma/(N_\gamma+1)$ were naturally omitted, and $\Gamma'$ was replaced by $\Gamma\cap[0,T]$.

The LLL lattice reduction algorithm \cite{LLL}, combined with some additional linear algebra, may be employed to establish weak independence of a finite set $\Gamma'$ of positive real numbers, relative to a parent set $\Gamma$ and a real parameter $T$.
In this article, we are able to employ some weak independence results established in prior work, so we omit a detailed description of the algorithm, and refer the reader to the following sources.
The article \cite{MosTru} provides a detailed explanation of Ingham's method, extended to the family of functions $L_\alpha(x)$ from \eqref{eqnL}, including connections with the Riemann hypothesis, the simplicity of the zeros of the zeta function, and the linear independence question.
The publications \cite{Best,Olive,MT2017,Dickens} describe the method of weak independence, and computational strategies for establishing this property.
The first of these treats the Mertens function $M(x)$, the third deals with the family $L_\alpha(x)$, and the last one considers $H_0(x)$ from \eqref{eqnH}.
In particular, the paper \cite{MT2017} contains substantial exposition on this method, including for example a proof of \eqref{eqnAndStark}.

\section{Oscillations in $S_\alpha(x)$}\label{secS}

Since $S_\alpha(x)$ from \eqref{eqnSalpha} is rather similar to P\'olya's function $L_\alpha(x)$, one might begin by employing some analysis similar to that of \cite{MosTru} and \cite{MT2017}.
Indeed, when $\alpha=0$ there is a simple relationship connecting these two functions:
\begin{align*}
S_0(2n) &= \sum_{\substack{k\leq2n\\2\mid k}} (-1)^{\Omega(k)} - \sum_{\substack{k\leq2n\\2\nmid k}} (-1)^{\Omega(k)}\\
&= -\sum_{j\leq n} (-1)^{\Omega(j)} - \sum_{k\leq 2n} (-1)^{\Omega(k)} + \sum_{\substack{k\leq2n\\2\mid k}} (-1)^{\Omega(k)}\\
&= -L_0(2n) - 2L_0(n).
\end{align*}
Thus, we might expect $S_\alpha(x)$ to exhibit a bias with sign opposite to that of $L_\alpha(x)$, and we might hope that methods that established large oscillations in $L_\alpha(x)$ might also produce similarly good results for $S_\alpha(x)$.
Indeed, this turns out to be the case, as we show below.

Let $Y(s)$ denote the Dirichlet series
\[
Y(s) = \sum_{n=1}^{\infty} \frac{(-1)^{n - \Omega(n)}}{n^{s}}.
\]
If $\sigma>1$, then it is straightforward to establish that
\begin{equation*}\label{pedal}
Y(s) = -(1 + 2^{1-s}) \frac{\zeta(2s)}{\zeta(s)}.
\end{equation*}
This was also given by Sun \cite{Sun}.
Standard results (see, e.g., \cite{MosTru}) show that if either the Riemann hypothesis is false or a zero of $\zeta(s)$ is not simple, then
\begin{equation*}\label{keys}
\limsup_{x\rightarrow \infty} \frac{S_\alpha(x)}{x^{\frac{1}{2}-\alpha}} = \infty \textrm{\;\ and\ \;} \liminf_{x\rightarrow \infty} \frac{S_\alpha(x)}{x^{\frac{1}{2}-\alpha}} = -\infty.
\end{equation*}
Hence we may assume the Riemann hypothesis and the simplicity of the zeros in order to exhibit large oscillations.
We follow the same procedure as in \cite{MosTru}.
With $\mathcal{S}_\alpha(x)$ as in \eqref{eqnScriptS}, we define
\[
B_\alpha(u) = \mathcal{S}_\alpha(e^u) e^{(\alpha-\frac{1}{2})u}.
\]
The Laplace transform of $B_\alpha(u)$ is
\[
F_\alpha(s) = \int_0^\infty B_\alpha(u) e^{-su}\,du,
\]
and as in \cite{MosTru} we compute
\begin{equation}\label{eqnLTS}
F_\alpha(s) = \begin{cases}
\displaystyle f_\alpha(s), & \textrm{if $0\leq\alpha<1/2$, or $\alpha=1$},\\
\displaystyle f_\alpha(s) + \frac{1+\sqrt{2}}{2\zeta(1/2)s^2}, & \textrm{if $\alpha=1/2$},\\[8pt]
\displaystyle f_\alpha(s) + \frac{(1+2^{-\alpha})\zeta(2\alpha)}{\zeta(\alpha)(s-\alpha+1/2)}, & \textrm{if $1/2<\alpha<1$},
\end{cases}
\end{equation}
where
\begin{equation}\label{eqnSf}
f_\alpha(s) = \frac{Y(s+1/2)}{s-\alpha+1/2} = -\frac{(1+2^{\frac{1}{2}-s})\zeta(2s+1)}{(s-\alpha+1/2)\zeta(s+1/2)}.
\end{equation}
The adjustments made to $S_\alpha(x)$ when creating $\mathcal{S}_\alpha(x)$ in \eqref{eqnScriptS} are engineered so that the function $F_\alpha(s)$ in \eqref{eqnLTS} is analytic in the open half-plane $\{s= \sigma + it \in \mathbb{C}:\, \sigma > 0\}$ with only simple poles on the imaginary axis.
In particular, the adjustment at $\alpha=1/2$ removes the pole of order $2$ at $s=0$ in \eqref{eqnSf}, and the adjustment for $1/2<\alpha<1$ removes the simple pole on the real axis at $s=\alpha-1/2$.
This prepares us to employ the result of Anderson and Stark and the machinery of weak independence.

For $s=0$, we compute
\begin{equation}\label{alfalfa}
\Res(F_\alpha,0) = \begin{cases}
\displaystyle -\frac{1+\sqrt{2}}{(2\alpha-1)\zeta(1/2)}, & \textrm{if $\alpha\in[0,1/2)\cup(1/2,1]$},\\[8pt]
\displaystyle \frac{1+\sqrt{2}}{\zeta(1/2)}\left(\frac{\log 2}{2+\sqrt{2}}+\frac{\zeta'(1/2)}{2\zeta(1/2)}-\gamma_{0}\right), & \textrm{if $\alpha=1/2$,}
\end{cases}
\end{equation}
where $\gamma_{0}=0.57721\ldots$ denotes Euler's constant.
For $s=i\gamma_n$, where $\rho_n=\frac{1}{2}+i\gamma_n$ is the $n$th zero of the zeta function on the critical line in the upper half plane, we have
\begin{equation}\label{sorghum}
\Res(F_\alpha, i\gamma_n) = -\frac{(1+2^{\overline{\rho}_n})\zeta(2\rho_n)}{(\rho_n-\alpha)\zeta'(\rho_n)}.
\end{equation}

If $\Gamma'\subseteq\Gamma\cap[0,T]$ and $\Gamma'$ is $N$-independent in $\Gamma\cap[0,T]$, then using the result of Anderson and Stark \eqref{eqnAndStark} (where $\sigma_0=1/2$) we conclude that
\begin{equation}\label{wheat}
\begin{split}
\liminf_{u\to\infty} B_\alpha(u) &\leq \Res(F_\alpha,0) - \frac{2N}{N+1}\sum_{\gamma\in\Gamma'} k_T(\gamma)\abs{\Res(F_\alpha, i\gamma)},\\
\limsup_{u\to\infty} B_\alpha(u) &\geq \Res(F_\alpha,0) + \frac{2N}{N+1}\sum_{\gamma\in\Gamma'} k_T(\gamma)\abs{\Res(F_\alpha, i\gamma)}.
\end{split}
\end{equation}
In particular, when $\alpha=0$, this produces (writing $\rho=\frac{1}{2}+i\gamma$)
\begin{equation}\label{eqnS0high}
\frac{S_0(e^u)}{e^{u/2}}  > -\frac{1 + \sqrt{2}}{\zeta(1/2)} + \sum_{\gamma\in\Gamma'} k_T(\gamma)\abs{\frac{(1 + 2^{\overline{\rho}})\zeta(2\rho)}{\rho \zeta'(\rho)}} - \epsilon
\end{equation}
for an infinite sequence of $u\to\infty$, and likewise
\begin{equation}\label{eqnS0low}
\frac{S_0(e^u)}{e^{u/2}} < - \frac{1 + \sqrt{2}}{\zeta(1/2)} - \sum_{\gamma\in\Gamma'} k_T(\gamma)\abs{\frac{(1 + 2^{\overline{\rho}})\zeta(2\rho)}{\rho \zeta'(\rho)}} + \epsilon
\end{equation}
for another infinite sequence of $u\to\infty$.
Note that $-(1 + \sqrt{2})/\zeta(1/2) = 1.653\ldots$\,, so that, in a loose sense, one expects $S_{0}(x)$ to be biased towards positive values.
Similarly, $S_1(e^u)e^{u/2}$ exhibits the same oscillation bounds about $(1 + \sqrt{2})/\zeta(1/2)=-1.653\ldots$\,, since $\abs{\rho_n-1}=\abs{\rho_n}$.
We note that the conjectures \eqref{Sol2} and \eqref{Sol3} are the same as stating that the magnitude of the oscillations here never exceeds approximately $0.653$, since $1.653 - 0.653 = 1$ and $1.653 + 0.653 \approx 2.3$.

We may now complete the proof of Theorem~\ref{thmSAsymp} by employing a result on weak independence.

\begin{proof}[Proof of Theorem~\ref{thmSAsymp}]
In \cite{MT2017}, we determined a particular set $\Gamma'\subseteq\Gamma$ having $\abs{\Gamma'}=250$ with the property that $\Gamma'$ is $3100$-independent in $[0,\gamma_{3701}-10^{-10}]$.
Employing this set in \eqref{eqnS0high} and \eqref{eqnS0low}, and using the kernel of Jurkat and Peyerimhoff \eqref{eqnJPkernel}, produces oscillations with magnitude $1.6725193\ldots$ in each direction.
Since $\abs{\frac{1}{2}+i\gamma}\geq\abs{\frac{1}{2}+i\gamma-\alpha}\geq\abs{\gamma}$ for $0\leq\alpha\leq1$, the first statement follows.
The subsequent statements for $\alpha=0$ and $\alpha=1$ in \eqref{eqnSAsymp} recapitulate the inequalities in \eqref{eqnSaGen} for these cases.
However, the statements for $\alpha=1/4$ and $\alpha=3/4$ employ the oscillation bounds from \eqref{wheat} computed as $1.6726690$ for these cases.
Likewise, the final statement \eqref{eqnSaHalf}, regarding the case $\alpha=1/2$, employs the computed oscillation bound of $1.67271899$ for this case (which is in fact maximal over $0\leq\alpha\leq1$), together with the appropriate entries from \eqref{eqnScriptS} and \eqref{alfalfa}.
\end{proof}

We remark that it is possible to improve the bounds in Theorem~\ref{thmSAsymp} with a new computation, using zeros selected to maximize contributions according to the residues \eqref{sorghum} in this problem, rather than employing the set of residues that were selected for the functions $L_\alpha(x)$.
However, we would not witness a substantial gain with a  calculation of approximately the same size: we estimate gaining $0.015$ in the oscillation bound in each direction from a computation with approximately $n=240$ zeros.
This computation would require several months of core time, so with a high cost relative to a small benefit we opted against a new calculation here.

Figures~\ref{figS0sampling} and~\ref{figS1sampling} illustrate the actual oscillations exhibited by $S_0(e^u)e^{-u/2}$ and $S_1(e^u)e^{u/2}$ respectively over $24\leq u\leq 34.25\ldots$\,, along with their respective center line at $u=\pm(1+\sqrt{2})/\zeta(1/2)$.
Also, note that when $\alpha=1/2$ the oscillations of $S_{1/2}(e^u)$ center on the line
\begin{equation}\label{eqnS2center}
-\frac{1+\sqrt{2}}{2\zeta(1/2)}u + \frac{1+\sqrt{2}}{\zeta(1/2)}\left(\frac{\log 2}{2+\sqrt{2}}+\frac{\zeta'(1/2)}{2\zeta(1/2)}-\gamma_{0}\right) \approx 0.826585u -1.60167.
\end{equation}
Figure~\ref{figSQ2plot} displays the sampled values of $S_{1/2}(u)$ over $24\leq u\leq30$, along with this line.

\section{Oscillations in $H_\alpha(x)$}\label{secH}

In order to prove Theorem~\ref{thmHosc}, we generalize our analysis in \cite{Dickens}, in a manner analogous to the prior section and our study of the behavior $L_\alpha(x)$ from \cite{MosTru} and \cite{MT2017}.
In \cite{Dickens}, for the case $\alpha=0$ we computed the Laplace transform of $H_0(e^u)/e^{u/2}$: under the Riemann hypothesis, this function is analytic in the half plane $\sigma>0$.
For the general case, we proceed in a similar manner, except that when $\alpha\in(1/2,1)$ one must incorporate an adjustment to prevent the appearance of a pole on the positive real axis in the corresponding Laplace transform.
In essence one must account for the value of $h(\alpha)$ for $1/2<\alpha<1$, with $h(\alpha)$ defined in this interval by \eqref{chalk}.
We made precisely the analogous adjustment in the analysis of $L_\alpha(x)$ in \cite{MosTru} and \cite{MT2017}.
Using the function $\mathcal{H}_\alpha$ from \eqref{eqnHcal}, for $u\geq 0$ we set $A_\alpha(u)$ to be a suitable scaling of $\mathcal{H}_\alpha(e^u)$:
\begin{equation*}\label{stalk}
A_{\alpha}(u) = \mathcal{H}_{\alpha}(e^{u}) e^{(\alpha - \frac{1}{2}) u}.
\end{equation*}
Next, we set
\begin{equation}\label{hawk}
h_{\alpha}(s) = \frac{h(s+\frac{1}{2})}{s-\alpha + \frac{1}{2}}
\end{equation}
and
\begin{equation*}\label{baulk}
G_{\alpha}(s) =
\begin{cases} 
\displaystyle h_{\alpha}(s), & \textrm{if $0 \leq \alpha \leq \frac{1}{2}$},\\
\displaystyle h_{\alpha}(s) - \frac{h(\alpha)}{s- \alpha + \frac{1}{2}}, & \textrm{if $\frac{1}{2} <\alpha \leq 1$}.
\end{cases}
\end{equation*}
Using an argument very similar to that employed in \cite{MosTru} for $L_\alpha(x)$, we find that for all $\alpha\in[0,1]$, the function $G_\alpha(s)$ is the Laplace transform of $A_\alpha(s)$:
\begin{equation}\label{caulk}
G_{\alpha}(s) = \int_{0}^{\infty} A_{\alpha}(u) e^{-su}\, du,
\end{equation}
and, under the Riemann hypothesis, the integral in \eqref{caulk} converges for all $\sigma >0$.
One slight difference arises here: when $\alpha=1/2$ there is a pole of $h_{\alpha}(s)$ in \eqref{hawk} when $s=0$.
However, this is canceled by the zero of $h(1/2)$, which itself comes from the pole of $\zeta(2s)$ in \eqref{chalk}.
As such, unlike the case of $L_{1/2}(x)$ or $S_{1/2}(x)$, there is no dramatic bias in $H_{1/2}(x)$.

It is now straightforward to calculate the residue of $G_{\alpha}(s)$ at each pole along $\sigma = 0$.
We have
\begin{equation}\label{dork}
\Res(G_\alpha, 0) =
\begin{cases}  0, & \textrm{if $0 \leq \alpha \leq \frac{1}{2}$},\\
h(\alpha), & \textrm{if $\frac{1}{2} <\alpha \leq 1$}.
\end{cases}
\end{equation}
From \eqref{dork} and the fact that $h(1)=0$, we expect no bias in the sum $H_{\alpha}(x)$ for $0\leq \alpha \leq 1/2$ and for $\alpha =1$.
Over $1/2 < \alpha <1$, we expect a bias in this function whenever $h(\alpha)\neq0$.
Figure~\ref{fighplot} exhibits $h(\alpha)$ over $1/2\leq\alpha\leq1$.
It is interesting that the sign of the bias changes over this range.
For $1/2<\alpha<\alpha^*\approx0.63093$, our plot indicates that we expect $H_\alpha(x)$ to exhibit a \textit{negative} bias, with maximal negative bias occurring near $\alpha_1=0.55336$, where $h(\alpha_1)=-0.0950579\ldots$\,.
That is, the values of $H_{\alpha_1}(x)$ will oscillate relative to the base curve $h(\alpha_1)e^{(\alpha_1-\frac{1}{2})u}$.
For $\alpha^*<\alpha<1$, we expect the sum $H_\alpha(x)$ to display a \textit{positive} bias, with maximal bias occurring near $\alpha_2=0.73587$, where $h(\alpha_2)=0.0804324\ldots$\,.
Figure~\ref{figHbias} displays $H_{\alpha}(e^u) e^{\alpha-1/2}$ for two values of $\alpha$ near these points of maximal bias.
In these plots, the bias specified by $h(\alpha)$ was not removed as in \eqref{eqnHcal}, so in each of these graphs the oscillations center on an exponential function.
In part (b) of this figure, where $\alpha=3/4$, the bias is quite evident, since the centering curve is $e^{u/4}$.
However, in part (a) it is not so obvious, since the guiding exponential $e^{u/20}$ remains rather small over the plotted interval.

\begin{figure}[tb]
\begin{center}
\includegraphics[width=2.5in]{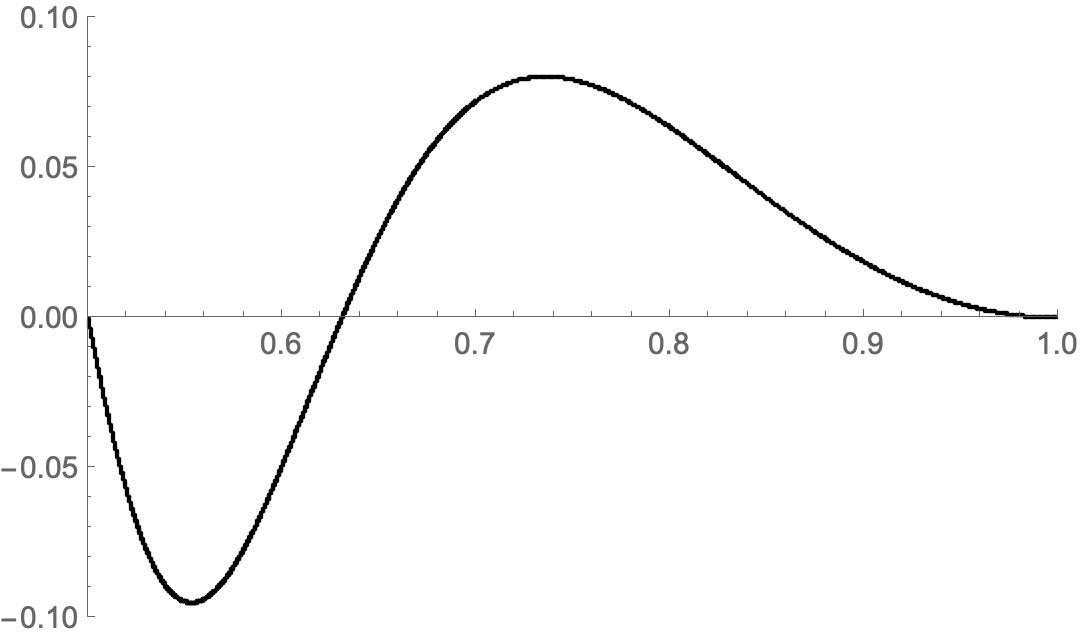}
\end{center}
\caption{$h(\alpha)$ for $\frac{1}{2}\leq\alpha\leq1$.}\label{fighplot}
\end{figure}

We may now establish Theorem~\ref{thmHosc} by computing the relevant residues and employing a particular set of zeros of the zeta function known to possess a weak independence property.

\begin{proof}[Proof of Theorem~\ref{thmHosc}]
In \cite{Dickens}, we computed the residue of $G_0(s)$ at each of its simple poles on the imaginary axis, so at each point $s=i\gamma_n$ where $\rho_n=\frac{1}{2}+\gamma_n$ denotes the $n$th zero of the Riemann zeta function on the critical line in the upper half plane.
This derivation generalizes in a natural way and yields
\[
\Res(G_\alpha, \gamma_n) = \frac{F_6(\rho_n)}{(\rho_n-\alpha)\zeta'(\rho_n)Z_6(\rho_n)}.
\]
The same article employed a particular set $\Gamma'\subseteq[0,T]$ of ordinates of $239$ zeros of the zeta function on the critical line, with $T=\gamma_{3701}-10^{-10}$, to establish large oscillations for $\mathcal{H}_0$.
By proving that $\Gamma'$ is $N$-independent in $\Gamma\cap[0,T]$ with $N=3950$, we obtained Theorem~\ref{thmHosc} from \eqref{eqnAndStark} in the case $\alpha=0$, since
\begin{equation}\label{eqnHLB0}
\frac{2N}{N+1}\sum_{\gamma\in\Gamma'} k_T(\gamma) \abs{\Res(G_0, \gamma)} > 1.700144.
\end{equation}
Theorem~\ref{thmHosc} follows almost immediately.
Since $\abs{\frac{1}{2}+i\gamma}\geq\abs{\frac{1}{2}+i\gamma-\alpha}\geq\abs{\gamma}$ for $0\leq\alpha\leq1$, certainly
\[
\abs{\Res(G_0, \gamma)} \leq \abs{\Res(G_\alpha, \gamma)} \leq \abs{\Res(G_{1/2}, \gamma)}
\]
over this range, and thus the bound \eqref{eqnHLB0} holds as well when $G_0$ is replaced by $G_\alpha$, for any $\alpha\in[0,1]$.
\end{proof}

We remark that since $\rho_n-(1-\alpha)=\overline{\alpha-\rho_n}$, we see that $\abs{\rho_n-\alpha}=\abs{\rho_n-(1-\alpha)}$, so the bound for $G_{1-\alpha}$ is in fact exactly the same as that for $G_\alpha$ for $\alpha\in[0,1]$.
In particular, the bound for $\alpha=1$, which was of particular interest in \cite{Sun}, matches the one for $\alpha=0$.
We add also that our method establishes bounds on oscillations for $0<\alpha<1$ that are only slightly stronger than \eqref{eqnHLB0}.
In the most beneficial case (when $\alpha=1/2$), we obtain
\[
\frac{2N}{N+1}\sum_{\gamma\in\Gamma'} k_T(\gamma) \abs{\Res(G_{1/2}, \gamma)} > 1.700282.
\]

Figure~\ref{figAH} displays plots of $A_\alpha(u)=\mathcal{H}_\alpha(u)e^{(\alpha-\frac{1}{2})u}$ for a number of values of $\alpha$.
These plots were obtained from sampling the values of these functions $H_\alpha(x)$ for $x\leq e^{30}$.
The graphs also display the axes where the oscillations are centered.
In part (d) of this figure, where $\alpha=3/4$, the axis is the horizontal line at $h(3/4)=0.0793843\ldots$\,; for the others it is the horizontal axis.

\section{Computations}\label{secComps}

We completed a number of computations to sample the values of $S_\alpha(x)$ and $H_\alpha(x)$ for several choices of the parameter $\alpha$, as well as $W(x)$ from \eqref{eqnW}.
Our method is similar to the one developed in \cite{Borwein} for the investigation of $L_0(x)$ and $L_1(x)$; a similar method was employed in \cite{MT2017} for $L_\alpha(x)$ and in \cite{Dickens} for $H_0(x)$.
We summarize the method used here for obtaining values of the functions $S_\alpha(x)$, in order to provide information for Theorem~\ref{thmSComps}.

By using Perron's formula and the procedure of \cite{MosTru}, we find that
\begin{equation}\label{teak}
\frac{S_0(e^u)}{e^{u/2}} = -\frac{1 + \sqrt{2}}{\zeta(1/2)} - \sum_{\abs{\gamma_n}\leq T} \frac{(1 + 2^{1/2 - i\gamma_n})\zeta(2\rho_n) e^{i\gamma_n u}}{\rho_n\zeta'(\rho_n)} + E(u, T),
\end{equation}
where the error term $E(u,T)$ satisfies $E(u,T)\to0$ as $u\to\infty$.
We use this to estimate the behavior of $S_0(e^u)$, by setting $E(u,T)=0$ in \eqref{teak} and calculating the expression on the right with $T$ set to certain values, and then plotting the results.
When $T=3000$, so with $2469$ zeros of the zeta function in the upper half plane, we detected a potential crossing point for the lower bound in \eqref{Sol2}---see Figure~\ref{figS0cross}(a).
This suggests that $S_0(x)/\sqrt{x}$ may dip below $1$ near $x=\exp(34.186)\approx 7.027\cdot10^{14}$, and similarly $S_1(x)\sqrt{x}$ may cross the lower threshold of $-2.3$ from \eqref{Sol3} near the same location.
These values seemed within range of a distributed computation, so we aimed to calculate $S_0(x)$ and $S_1(x)$ for $x\leq 7.5\cdot 10^{14}$.

\begin{figure}[tb]
\begin{center}
\begin{tabular}{cc}
\includegraphics[width=2.25in]{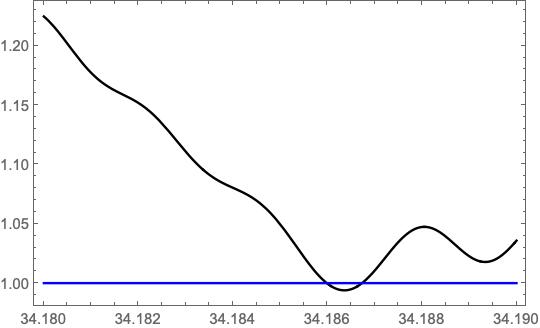} &
\includegraphics[width=2.25in]{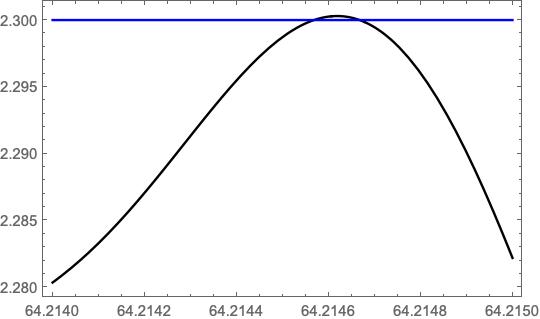}\\
(a) $[34.18, 34.19]$ ($T=3000$) & (b) $[64.214, 64.215]$ ($T=5200$)
\end{tabular}
\end{center}
\caption{Estimating $S_0(e^u)/e^{u/2}$ over particular intervals using \eqref{teak}.}\label{figS0cross}
\end{figure}

\begin{figure}[tb]
\begin{center}
\includegraphics[width=3.5in]{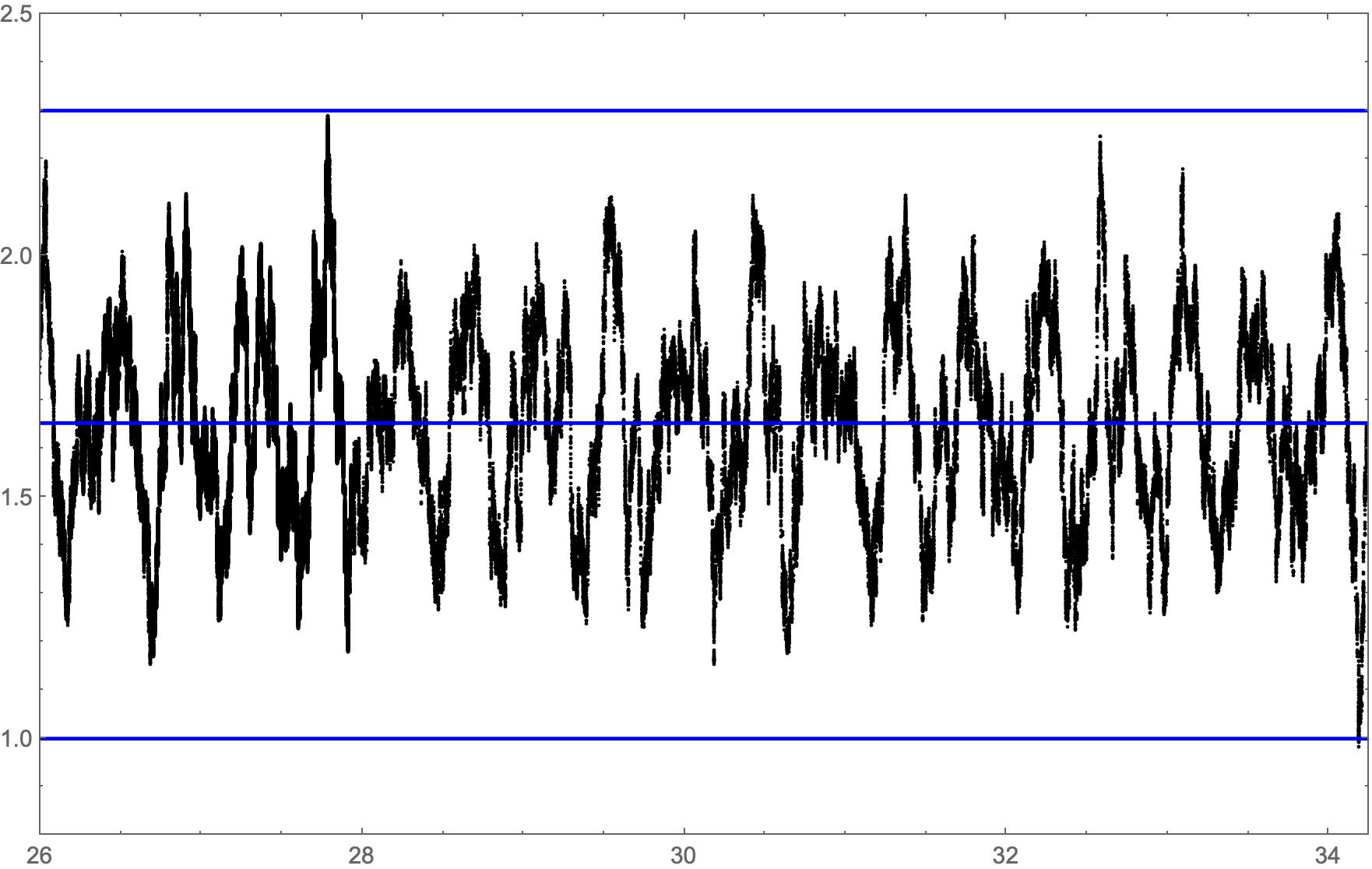}
\caption{Sampled values for $S_0(e^u)/e^{u/2}$, oscillating about $-(1+\sqrt{2})/\zeta(1/2)$.}\label{figS0sampling}
\end{center}
\end{figure}

\begin{figure}[tb]
\begin{center}
\includegraphics[width=3.5in]{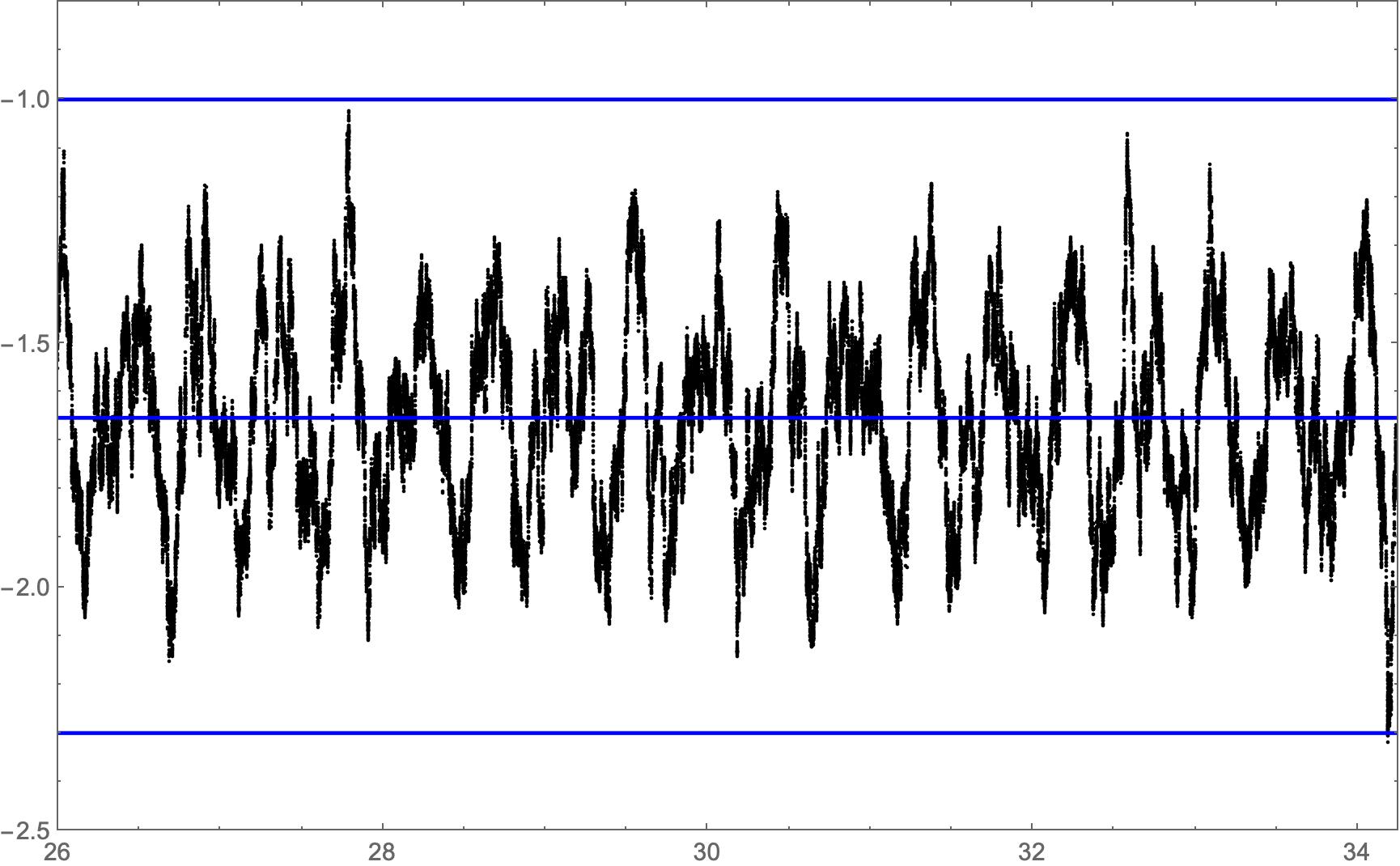}
\caption{Sampled values for $S_1(e^u)e^{u/2}$, oscillating about $(1+\sqrt{2})/\zeta(1/2)$.}\label{figS1sampling}
\end{center}
\end{figure}

To calculate values for $S_0(x)$, we first created a large table to store the parity of the values of $n-\Omega(n)$ for all positive integers $n$ with $\gcd(n,30)=1$ and $n\leq M_S=5\cdot 10^{10}$.
This required about $1.55$ gigabytes of storage.
We employed a boot-strapping strategy to create this table: once the values up to a bound $y$ have been computed, we can then sieve the interval $[y+1, 2y]$ using the primes $p\leq\sqrt{2y}$ to determine the parity of the total number of divisors for each qualifying integer in this interval.
Here we use the existing table for $[1,y]$ whenever a remaining cofactor lands in this interval, once powers of $2$, $3$, and $5$ are properly accounted for.
After the full table to $M_S$ has been constructed, we launch a distributed calculation, with each processor employing a copy of the table to assist with sieving its own portion of the targeted range.
Each process handled an interval of size $8\cdot10^{10}$, usually breaking this up into $3200$ subintervals of size $25$ million in order to make efficient use of memory, as it was advantageous in scheduling to control the total memory size of each job.
Two bits per integer were required during the sieving process, to record the parity of $n-\Omega(n)$ at location $n$, or to indicate that the value at $n$ is not yet known, so a subinterval required $6.25$ megabytes of memory.

When a process sieves an interval $[a,b]$, it first sieves using primes $b/M_S<p\leq\sqrt{b}$, working from the largest  primes downward, since removing such a prime from any integer in this interval produces a cofactor in the stored table for $[1,M_S]$, after removing any factors of $2$, $3$, and $5$.
After this we sieve with the primes $7\leq p\leq b/M_S$, starting again with the largest primes and working downwards.
The cofactor remaining after removing all factors of such a prime $p$ may still exceed $M_S$, so we employ trial division beginning with $p=2$ until the remaining cofactor is less than $M_S$, or has been determined to be prime.
After this, the remaining unknown elements in the current interval $[a,b]$ must have the form $2^i 3^j 5^k q$ with $q=1$ or $q$ prime, and some final scans take care of these.
Cumulative totals are computed at the conclusion of each subinterval, and sampled values are occasionally produced as output.
These sampled values record cumulative totals relative to the start of the large interval handled by the current process.
At its conclusion, a process records cumulative totals over its entire run, and once all processes are completed these partial sums may be used to adjust the sampled values produced across the entire computation.

The same strategy was employed to compute values of $S_\alpha(x)$ for $\alpha=1/4$, $1/2$, $3/4$, and $1$, although here we accumulated the positive and negative contributions separately to guard against loss of precision.

Our computations allowed us to establish Theorem~\ref{thmSComps}.
Record high and low values were recorded for each interval of size $8\cdot10^{10}$ handled by a process, and this produced the record values in parts (b) and (d) of this theorem.
For each of parts (a) and (c), we needed to run an additional job to determine the precise first crossing points, once the true value of each process' starting point was known.

Figures~\ref{figS0sampling} and~\ref{figS1sampling} exhibit our results for sampling over $24\leq u\leq\log(7.5\cdot 10^{14})\approx34.251$ for the functions $S_0(e^u)e^{-u/2}$ and $S_1(e^u)e^{u/2}$.
These plots also show the center of the oscillations $\pm(1+\sqrt{2})/\zeta(1/2)$, and the conjectured bounds from \eqref{Sol2} and \eqref{Sol3}.
In each plot, the crossing in the conjectured lower bounds in these inequalities near $u=34.186$ is evident.

\begin{figure}[tb]
\begin{center}
\includegraphics[width=3.5in]{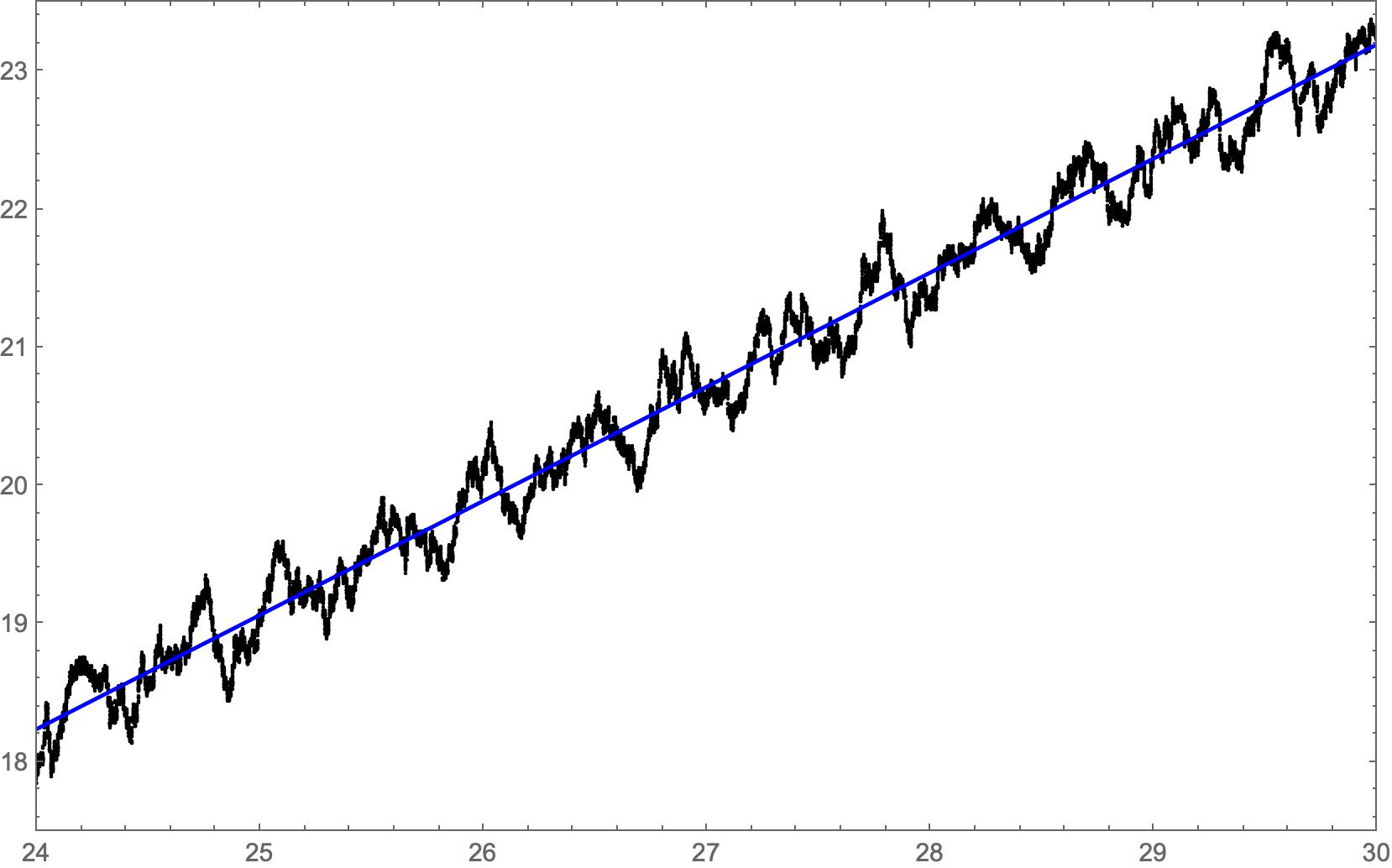}
\end{center}
\caption{$S_{1/2}(e^u)$ and the line \eqref{eqnS2center} for $24\leq u\leq30$.}\label{figSQ2plot}
\end{figure}

No violations of the upper bound in \eqref{Sol2} or \eqref{Sol3} were found in this computation; the closest point was
\[
S_0(1165833625987) = 2474979,
\]
which achieves $2.2922$, and is visible as a near-miss in Figure~\ref{figS0sampling}.
A similar near-miss occurs for $S_1(x)$ as seen in Figure~\ref{figS1sampling}.
However, further experimentation with \eqref{teak} suggested a potential location for such crossings.
Figure~\ref{figS0cross}(b) illustrates a crossing for the estimate from this expression when $T=5200$, so using $4734$ zeros of the zeta function, near $u=64.21455$, or approximately $x=7.27\cdot10^{27}$.
This is well beyond our present computational range.

Our strategy was similar for computing $H_\alpha(x)$, for a number of values of $\alpha$, with suitable adjustments to the code to track the parity of $\omega(n)$ rather than $n-\Omega(n)$.
Figure~\ref{figAH} exhibits plots for $A_\alpha(u)=\mathcal{H}_\alpha(u)e^{(\alpha-\frac{1}{2})u}$ for several values of $\alpha$.

\begin{figure}[tb]
\begin{tabular}{ccc}
\includegraphics[width=\oscwidth]{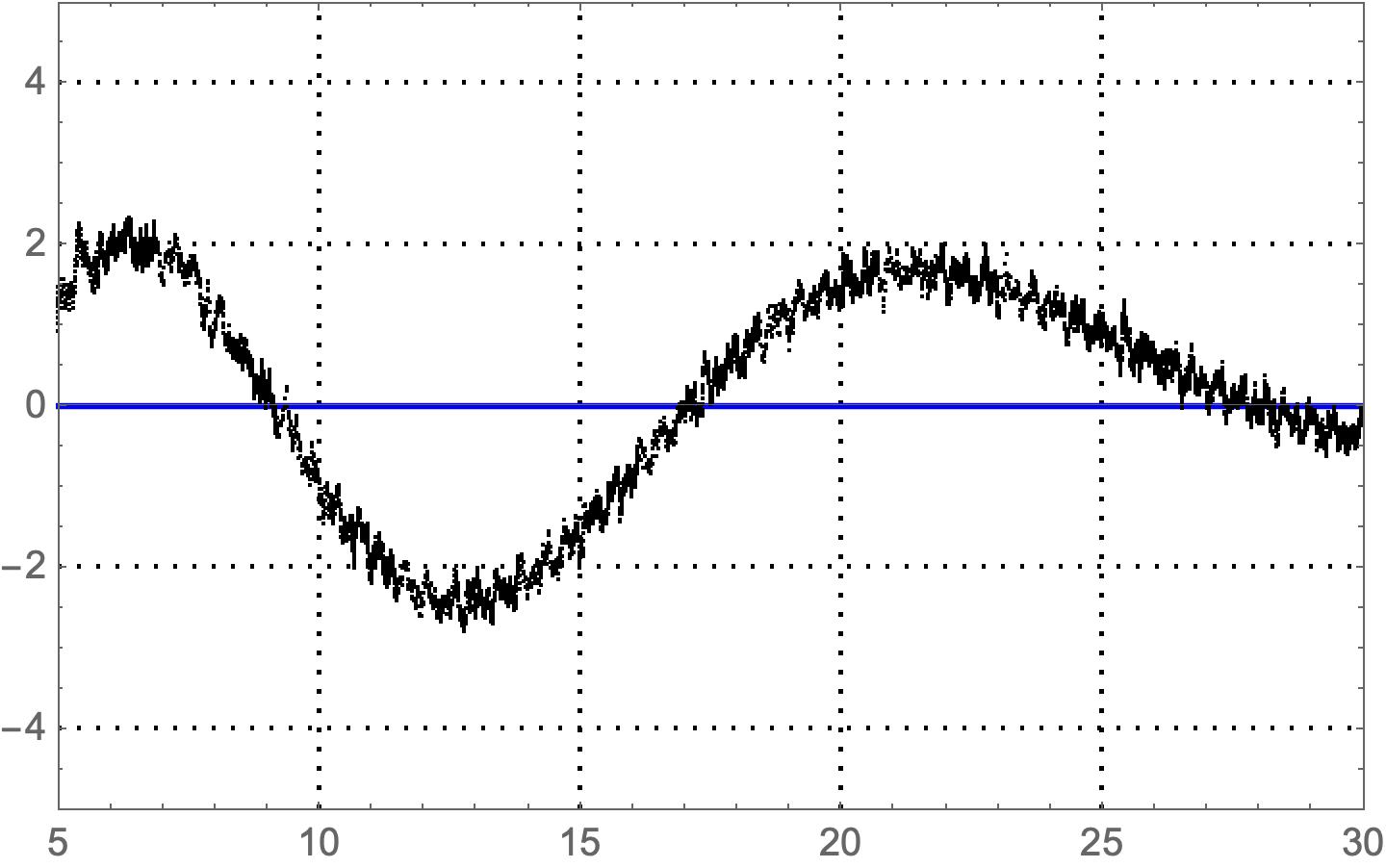} &
\includegraphics[width=\oscwidth]{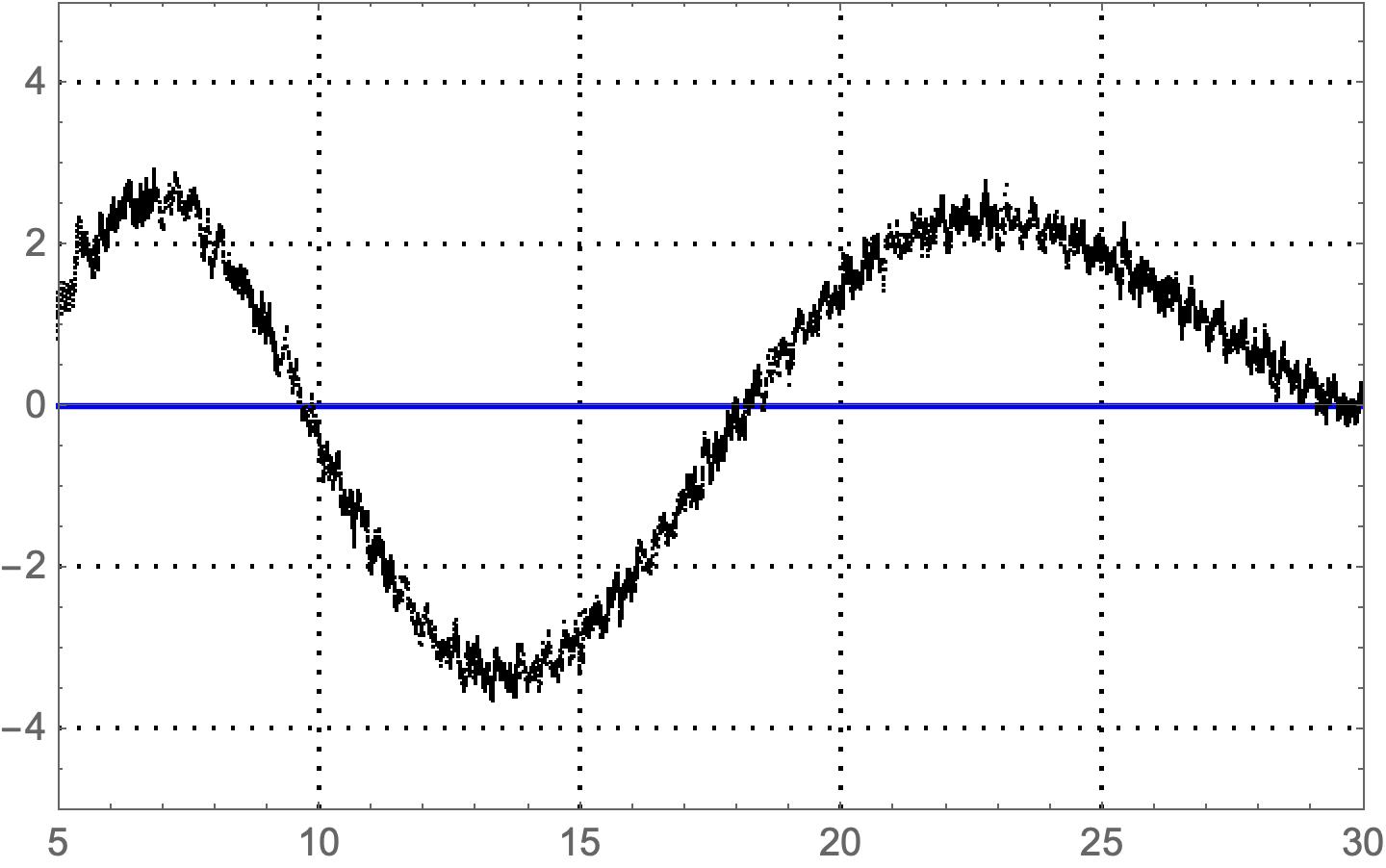} &
\includegraphics[width=\oscwidth]{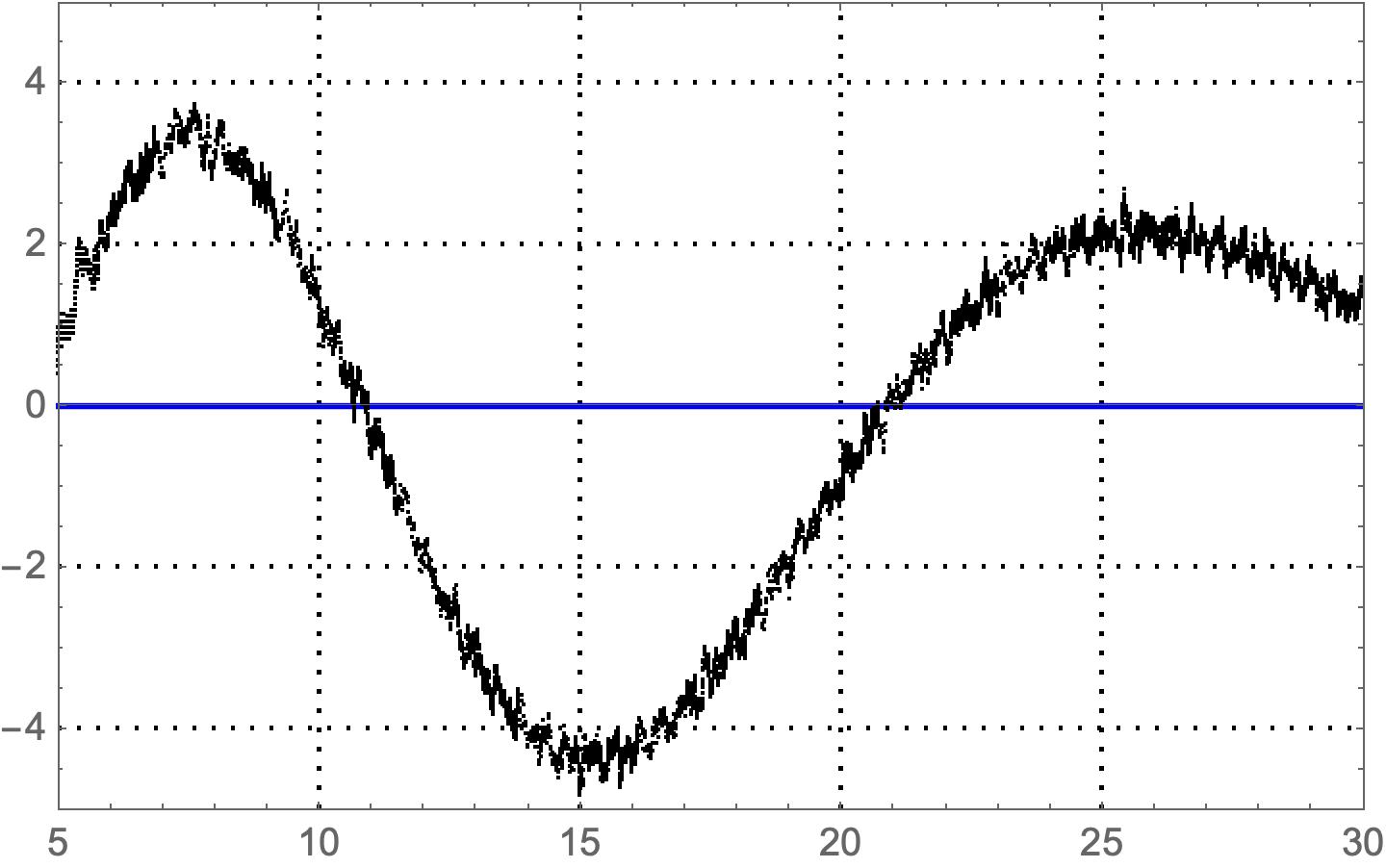} \\
(a) $\alpha=0$ &
(b) $\alpha=1/4$ &
(c) $\alpha=1/2$ \\[12pt]
\multicolumn{3}{c}{
\begin{tabular}{cc}
\includegraphics[width=\oscwidth]{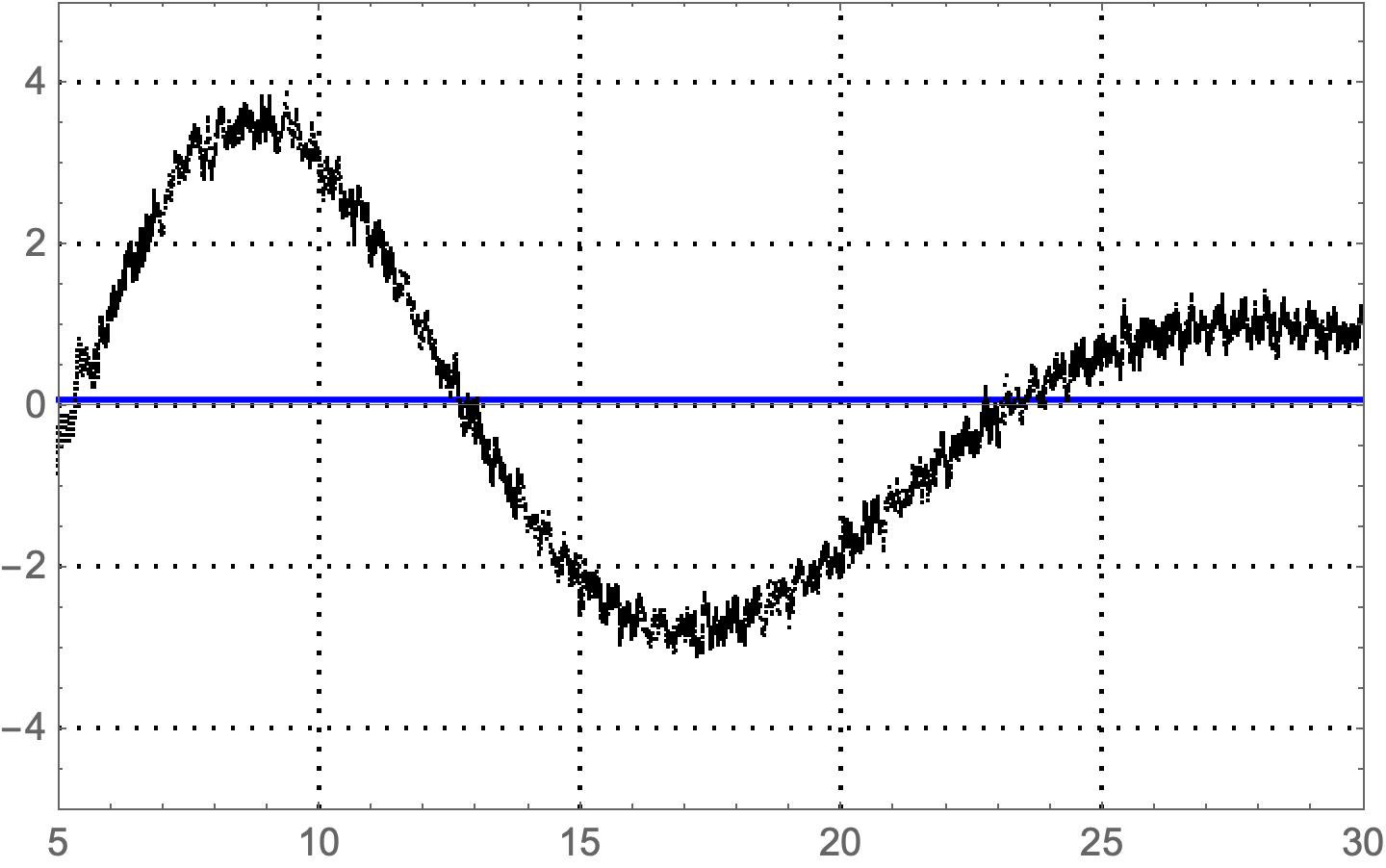} &
\includegraphics[width=\oscwidth]{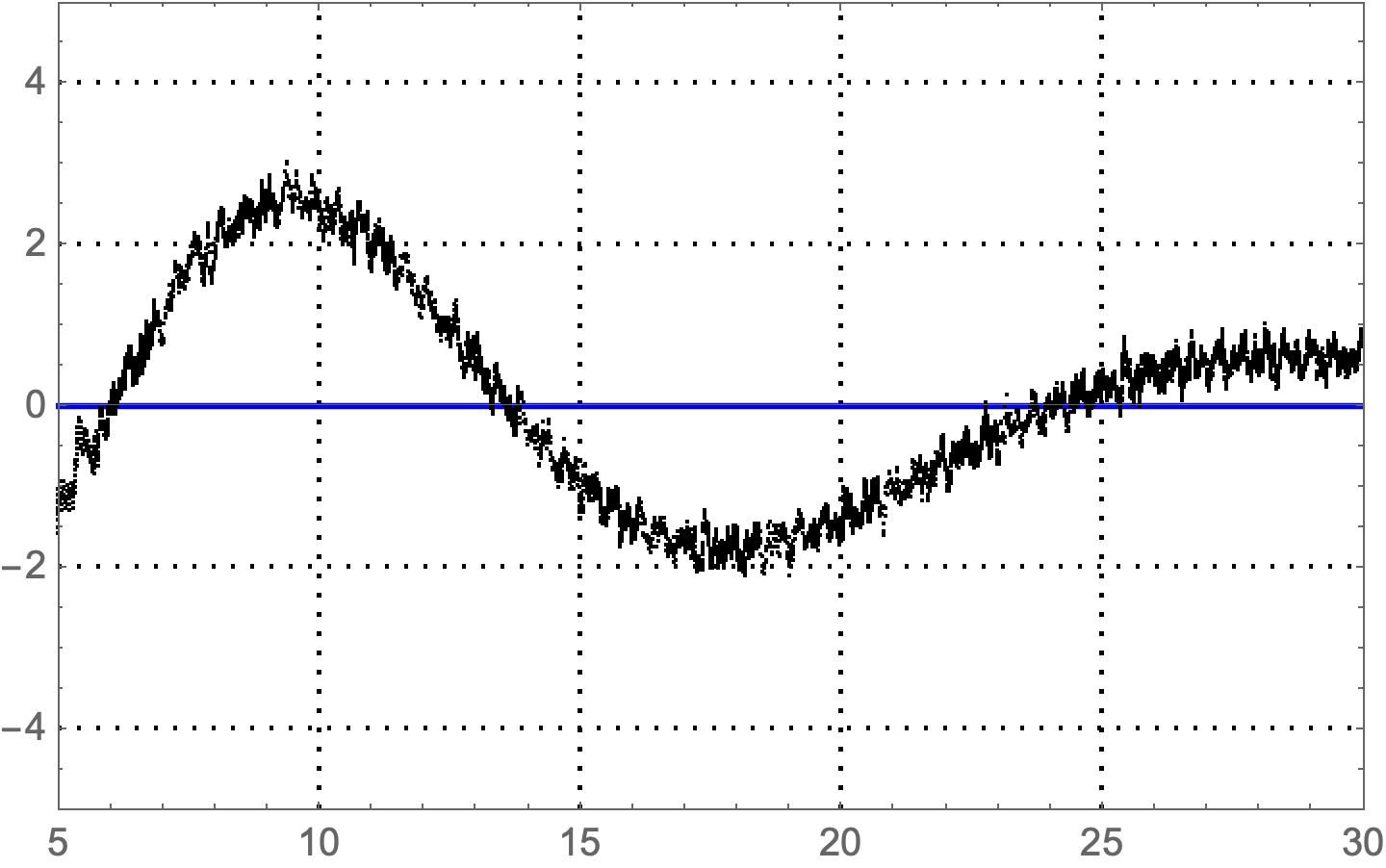} \\
(d) $\alpha=3/4$ &
(e) $\alpha=1$
\end{tabular}}\\
\end{tabular}
\caption{$A_\alpha(u)=\mathcal{H}_\alpha(u)e^{(\alpha-\frac{1}{2})u}$ for a number of values of $\alpha\in[0,1]$.}\label{figAH}
\end{figure}

\begin{figure}[tb]
\begin{center}
\begin{tabular}{cc}
\includegraphics[width=2in]{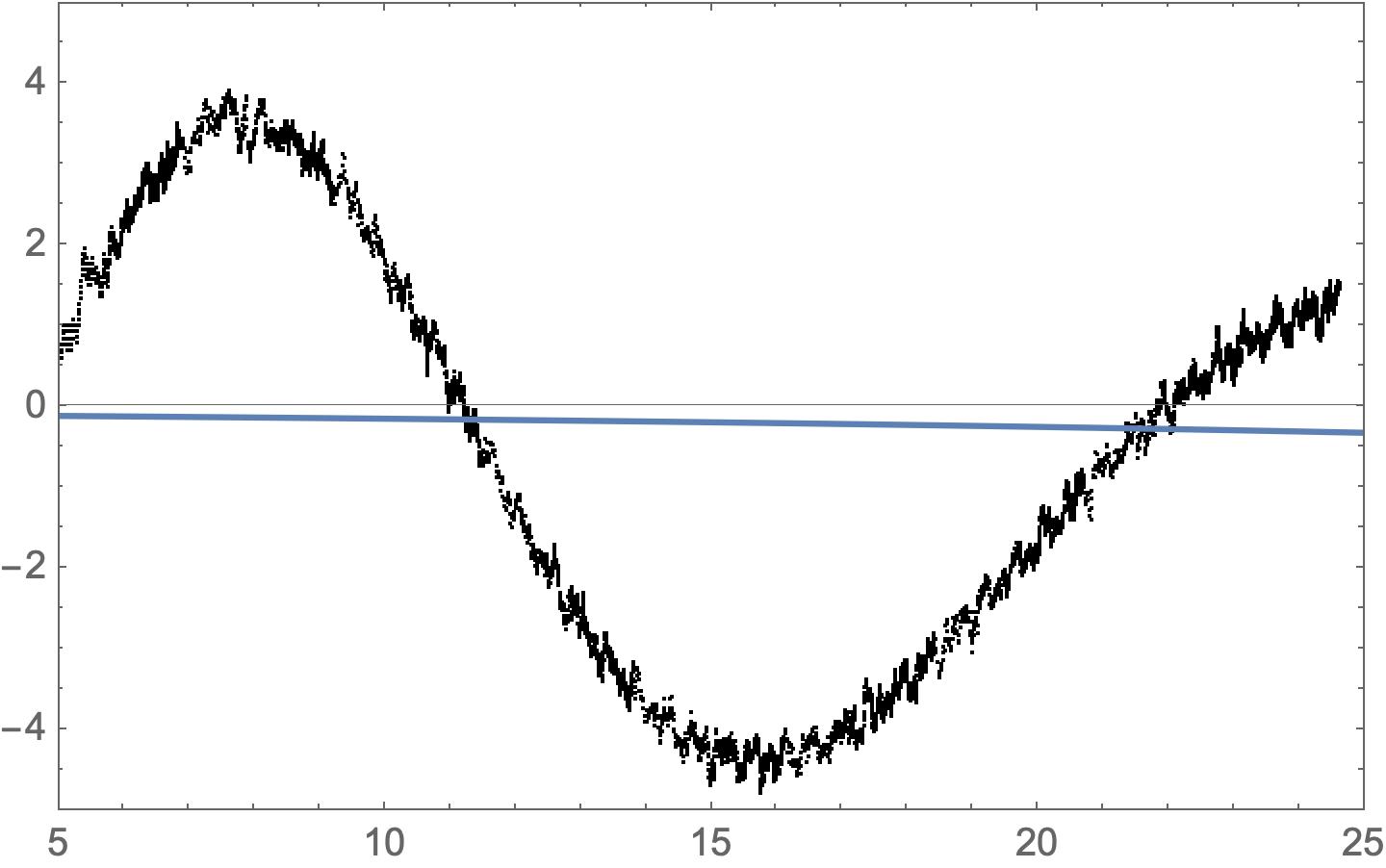} &
\includegraphics[width=2in]{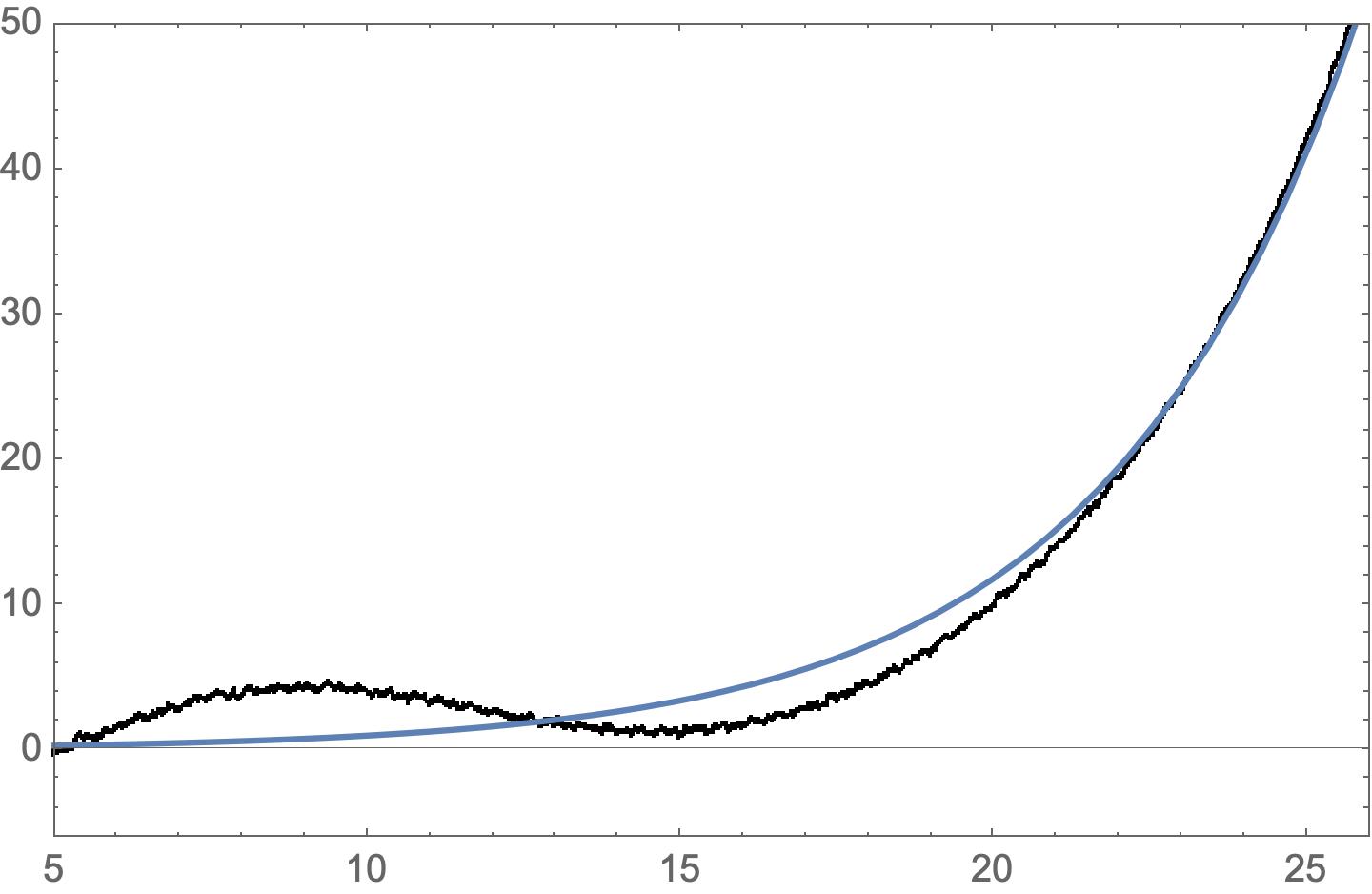}\\
(a) $\alpha=0.55$, $h(\alpha)=-0.094719\ldots$ & (b) $\alpha=0.75$, $h(\alpha)=0.079384\ldots$
\end{tabular}
\end{center}
\caption{$H_\alpha(e^u)e^{(\alpha-\frac{1}{2})u}$, without accounting for the bias $h(\alpha)$ as in \eqref{eqnHcal}, near locations of maximal bias in the negative and positive directions respectively.}\label{figHbias}
\end{figure}

A similar strategy was employed to compute values of $W(x)$ up to $2.5\cdot 10^{14}$, although here the table size was $M_W=3\cdot 10^{10}$, since we now required four bits to store the value of $\Omega(n)$ at each location in this interval, rather than simply its parity (note $\log_7(3\cdot10^{10})\approx12.4$).
This table then required about $3.72$ gigabytes of memory.
During the sieving stage, each process allocated one byte per integer in each sieving block, since $\log_2(2.5\cdot10^{14}) \approx 47.8$, so a subinterval required $25$ megabytes of storage.
We find that Sun's conjecture \eqref{Sol4} survives over this range: the maximum value of $W(x)/x$ for integers $x\in[3079, 2.5\cdot 10^{14}]$ occurs at $x=6261$, where the ratio is $0.989458\ldots$\,; the minimum value occurs at $x=3130$, where it is $-0.994568\ldots$\,.
We report a few additional values in Table~\ref{tblW}, including the best value achieved in each direction for $1.5\cdot10^{10}\leq x\leq 2.5\cdot10^{14}$.
Figure~\ref{figWsampling} displays $W(e^u)e^{-u}$ over $24\leq u\leq\log(2.5\cdot10^{14})\approx33.152$.

\begin{table}
\caption{Some local extrema for $W(x)/x$.}\label{tblW}
\begin{tabular}{|r|r|r|}\hline
\multicolumn{1}{|c|}{$x$} & \multicolumn{1}{|c|}{$W(x)$} & \multicolumn{1}{|c|}{$W(x)/x$}\\\hline
$3130$ & $-3113$ & $-0.994568\ldots$\\
$6261$ & $6195$ & $0.989458\ldots$\\
$6410313$ & $6316905$ & $0.985428\ldots$\\
$12820626$ & $-12574965$ & $-0.980838\ldots$\\
$52513285735$ & $-51364764131$ & $-0.978128\ldots$\\
$105026571390$ & $102737766207$ & $0.978207\ldots$\\\hline
\end{tabular}
\end{table}

\begin{figure}[tb]
\begin{center}
\includegraphics[width=3.5in]{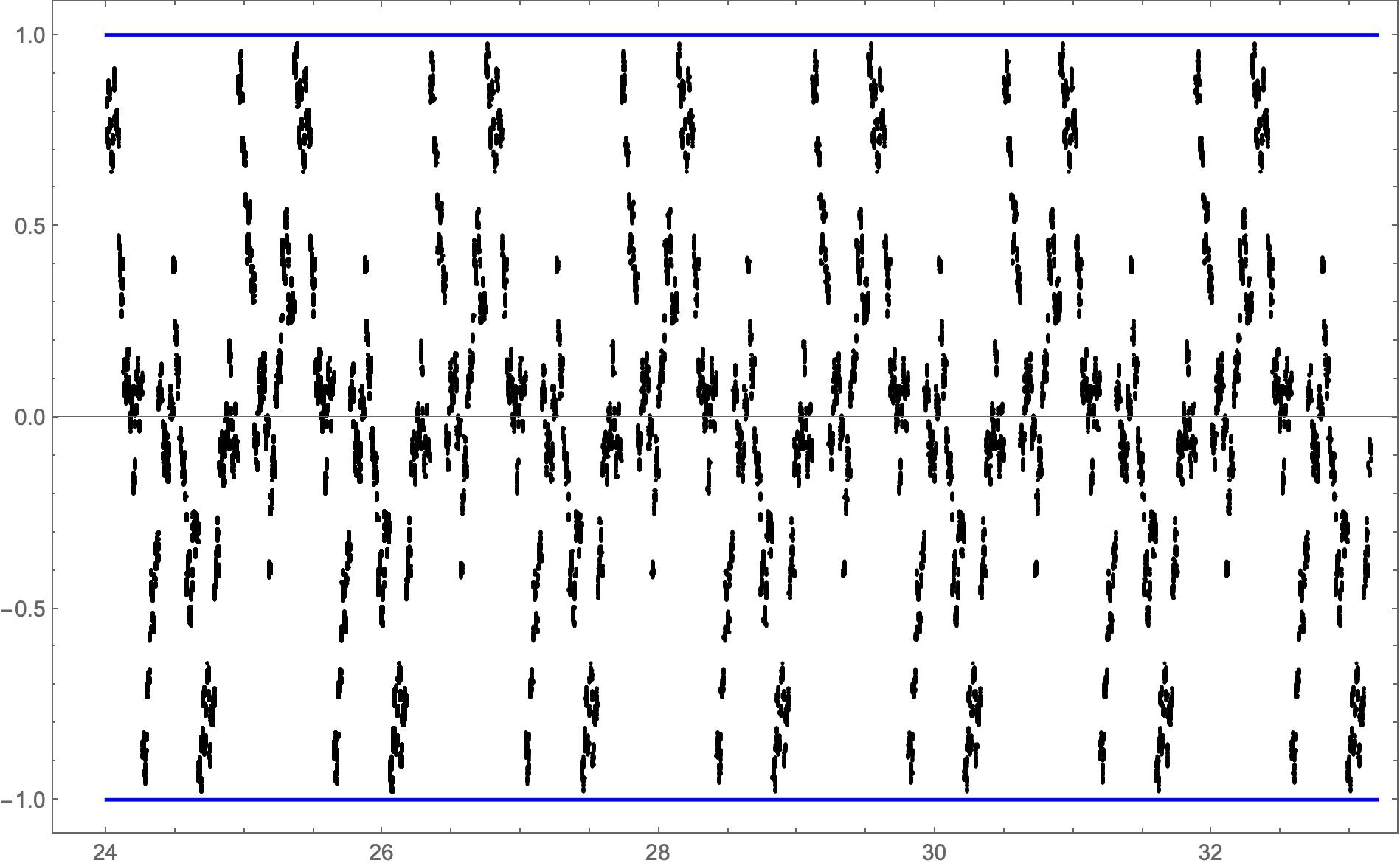}
\caption{Sampled values for $W(e^u)e^{-u}$.}\label{figWsampling}
\end{center}
\end{figure}

We note that the function $W(x)$ may be amenable to some further analysis.
As in \eqref{eqnhDef}, the corresponding Dirichlet function in this case is
\begin{equation*}\label{ebony}
J(s) = \sum_{n=1}^{\infty} \frac{(-2)^{\Omega(n)}}{n^{s}} = \prod_{p} \left( 1 + \frac{2}{p^{s}}\right)^{-1}
\end{equation*}
for $\sigma >1$, and using methods similar to that employed to create \eqref{chalk} for $h(s)$, one may show that
\begin{equation*}\label{lid}
J(s) = \frac{\zeta(2s)^{3} \zeta(4s)^{3} J_4(s)}{\zeta(s)^{2} \zeta(3s)^{2}},
\end{equation*}
where
\[
J_4(s) = \prod_p \frac{(1-p^{-2s})^3(1 - p^{-4s})^3}{(1 + 2p^{-s})(1-p^{-s})^2(1- p^{-3s})^2},
\]
which converges for $\sigma>1/5$.
The function $J(s+1/2)/(s+1/2)$ has poles of order~$2$ on the imaginary axis, as well as one of order $3$ at the origin. We can manipulate this to ensure that the result of Anderson and Stark \eqref{eqnAndStark} is applicable. However, some additional complications arise, which we hope to address in future work.

\section{Other conjectures}\label{secMoreConj}

Sun made two additional families of conjectures concerning the quantity $n-\Omega(n)$.
First, in \cite[Conj.\ 1.2]{Sun}, Sun opined that for $m=3$, $5\leq m\leq18$, or $m=20$, the proportion of integers $n\leq x$ having the property that $m\mid(n-\Omega(n))$ exceeds $1/m$, provided that $x\geq s(m)$, for a particular quantity $s(m)$.
For example, it was conjectured that $s(3)=62$, $s(5)=187$, and $s(20)=61$ all suffice.
For $m=4$, this proportion was posited to be bounded above by $1/4$, for $x\geq1793193$.
No conjecture was made in the case $m=19$, although it was remarked that perhaps this proportion is infinitely often less than $1/19$, and infinitely often greater.
We verified that Sun's seventeen conjectures here hold for $x\leq10^{11}$.
We can also report that the last crossing in the case $m=19$ over this range occurs at $x=49675549593$:
from here to $x=10^{11}$ the proportion of integers $n\leq x$ having $19\mid(n-\Omega(n))$ exceeds $1/19$.

Second, in \cite[Conj.\ 1.3]{Sun} Sun stated a conjecture regarding a variant of $S_0(x)$ twisted by Dirichlet characters.
For $d\equiv 0$ or $1$ mod~$4$, let $(\frac{d}{n})$ denote the Kronecker symbol, and let
\[
S_d(x) = \sum_{n\leq x} (-1)^{n - \Omega(n)} \left(\frac{d}{n}\right).
\]
Sun posited for example that
\begin{equation*}\label{Set}
S_{-4}(x)<0, \quad S_{-7}(x) < 0, \quad S_{-3}(x)>0, \quad  S_5(x) > 0,
\end{equation*}
with $x\geq 11$ sufficing in each case.
In fact, additional conjectures of this form for other values of $d$ were stated as well. These could all be investigated by using results on $N$-independence of zeros of the Dirichlet $L$-function $L(s, \chi)$ where $\chi$ is the non-principal character modulo $4$.
We know of no calculation, similar to that for $\zeta(s)$ in \cite{Bateman,Best}, that establishes good $N$-independence for Dirichlet $L$-functions, although Grosswald \cite[Thm.\ 5 and \S8, \S9]{Grosswald72} presents some calculations to this end.
Such a program of research appears both possible and interesting: we hope to return to this in future work.

\section*{Acknowledgments}

We thank Peter Humphries for bringing these questions to our attention, and we thank the referee for a number of helpful comments.
We also thank NCI Australia and UNSW Canberra for computational resources.
This research was undertaken with the assistance of resources and services from the National Computational Infrastructure (NCI), which is supported by the Australian Government.

\bibliographystyle{amsplain}

\end{document}